\pdfoutput=1
\documentclass[10pt]{article}
\usepackage{amssymb}
\usepackage{graphicx}
\usepackage{xcolor} 
\usepackage{tensor}
\usepackage{pgfplots}
\usepackage{fullpage} 
\usepackage[margin=1in]{geometry}
\usepackage{amsmath}
\usepackage{amsthm}
\usepackage{verbatim}
\usepackage{hyperref}
\usepackage{enumitem}
\usepackage{mathrsfs} 
\setlist[enumerate]{leftmargin=1.5em}
\setlist[itemize]{leftmargin=1.5em}

\def\Xint#1{\mathchoice
{\XXint\displaystyle\textstyle{#1}}%
{\XXint\textstyle\scriptstyle{#1}}%
{\XXint\scriptstyle\scriptscriptstyle{#1}}%
{\XXint\scriptscriptstyle\scriptscriptstyle{#1}}%
\!\int}
\def\XXint#1#2#3{{\setbox0=\hbox{$#1{#2#3}{\int}$ }
\vcenter{\hbox{$#2#3$ }}\kern-.6\wd0}}

\def\dashint{\Xint-}

\providecommand{\MR}{\relax\ifhmode\unskip\space\fi MR }

\providecommand{\href}[2]{#2}

\setlength{\marginparwidth}{.6in}
\setlength{\marginparsep}{.2in} 
\usepackage{seqsplit} 
\definecolor{green}{rgb}{0,0.8,0} 



\newtheorem{theorem}{Theorem}[section]

\newtheorem{lemma}[theorem]{Lemma}
\newtheorem{proposition}[theorem]{Proposition}

\theoremstyle{definition}
\newtheorem{definition}[theorem]{Definition}

\theoremstyle{remark}
\newtheorem{remark}[theorem]{Remark}

\numberwithin{equation}{section}

\newcommand{\nnrm}[1]{{\vert\kern-0.25ex\vert\kern-0.25ex\vert #1 
    \vert\kern-0.25ex\vert\kern-0.25ex\vert}}

\newcommand{\nb}{\nabla}

\newcommand{\alp}{\alpha}



\newcommand{\N}{\mathbb N}

\newcommand{\R}{\mathbb R}




\vfuzz2pt 
\hfuzz2pt 


\begin{document}

\title{Sharper bounds on the box-counting dimension of singularities in the hyperdissipative Navier-Stokes system}

\author{Min Jun Jo\thanks{Department of Mathematics, Duke University. E-mail:minjun.jo@duke.edu}}


\renewcommand{\thefootnote}{\fnsymbol{footnote}}
\footnotetext{\emph{Key words: Partial regularity, box-counting dimension, hyperdissipation, the Navier-Stokes equations} \\
\emph{2020 AMS Mathematics Subject Classification:} 	76D03, 76D05}

\date{\today}

\renewcommand{\thefootnote}{\arabic{footnote}}


\maketitle

\begin{abstract}
    We study upper bounds on the box-counting dimension of the set of potential singular points in suitable weak solutions to the 3D incompressible hyperdissipative Navier-Stokes system
    \begin{equation*}
    \begin{cases}
    \partial_t u + (-\Delta)^{\alpha}u+(u\cdot \nabla)u+\nabla p = 0, \\
    \operatorname{div} u = 0, 
    \end{cases}
\end{equation*}
for $\alp\in(1,5/4)$. Our main observation is that a classical iteration scheme developed in \cite{KY} and used in \cite{WW} to improve upper bounds for the full Laplacian case can be extended to the hyperdissipative case with properly chosen local quantities that are scale-invariant, despite non-locality of fractional Laplacian. This is achieved by matching up the correct orders of the temporal-spatial scales of the required estimates that effectively quantify $(-\Delta)^{\alp}$ during the iterations. In particular, we adopt the hyperdissipative framework built in the recent breakthrough \cite{CDM} where the upper bounds on the box-counting dimension of the set of potential singularities in $\alp$ are given by
\begin{equation*}
    L(\alp)= \frac{15-2\alp-8\alp^2}{3} \quad \mbox{for}\quad 1<\alp<\frac{5}{4}.
\end{equation*}
In this paper, we generalize the iteration scheme \cite{WW} designed for $\alp=1$ to the case $1<\alp<5/4$, which leads to the newly established bound
\begin{equation*}
    J(\alp)= \frac{36(3-\alpha)(3+2\alpha)(5-4\alpha)}{-64\alpha^3+272\alpha^2-300\alpha+369} \quad \mbox{for} \quad 1<\alp<\frac{5}{4},
\end{equation*}
improving the aforementioned bound $L(\alp)$ obtained in \cite{CDM}. See this picture:

\centering \begin{tikzpicture}
\begin{axis}[ xlabel={$\alpha$},ylabel={ $\operatorname{dim}_B (\mathcal{S})$}
,axis x line=bottom
,axis y line=left
,samples=150, thick
,xmin=1, xmax=13/10, ymin=0, ymax=2 
,legend pos=outer north east
]
\node at (axis cs:1,1.7)[anchor=west] {\small{$L(1)=\frac{5}{3}$}};
\addplot+[mark=none,domain=1:1.3] {5-x*2/3-x^2*8/3};
\addlegendentry{$\mbox{Colombo-De Lellis-Massaccesi \cite{CDM}}$};
\node at (axis cs:1,360/277)[anchor=west] {\small{$J(1)=\frac{360}{277}$}};
\node at (axis cs:1.15,1/4)[anchor=west] {\small{$J(\frac{5}{4})=L(\frac{5}{4})=0$}};
\addplot+[mark=none,domain=1:1.25]
{(-288*x^3+792*x^2+756*x-1620)/(64*x^3-272*x^2+300*x-369)};
\addlegendentry{J.};
\addplot[fill=white,draw=black,mark=*] coordinates{(1,5/3)};
\addplot[fill=white,draw=black,mark=*] coordinates{(1,360/277)};
\addplot[fill=white,draw=black,mark=*] coordinates{(1.25,0)};
\end{axis}
\end{tikzpicture}
\end{abstract}

\setcounter{tocdepth}{1}

\tableofcontents

\section{Introduction}
The incompressible $\alpha$-fractional Navier–Stokes system in $\mathbb{R}^3 \times \mathbb{R}_+$ are given by
\begin{equation}\label{fNSE}
    \begin{cases}
    \vspace{0.1in}
    \partial_t u + (-\Delta)^{\alpha}u+(u\cdot \nabla)u+\nabla p = 0, \\
    \operatorname{div} u = 0,
    \end{cases}
\end{equation}
where $u$ represents the fluid velocity field, $p$ denotes the associated pressure, and $(-\Delta)^{\alpha}$ is the differential operator defined by its Fourier symbol $|\xi|^{2\alpha}$ for $\alpha \geq 0$.

\vspace{0.1in}

\noindent \textbf{$\bullet$ The classical Navier–Stokes equations} 

\vspace{0.1in}

The case $\alpha = 1$ corresponds to the classical Navier–Stokes system, for which it remains unknown whether a unique global smooth solution exists for every smooth and sufficiently decaying (or compactly supported) initial datum $u_0$. In his seminal work \cite{Leray}, Leray developed the theory of global existence of weak solutions to \eqref{fNSE} in the case of full Laplacian $\alpha = 1$. Since then, numerous results have addressed the qualitative properties such solutions might possess. One natural question is whether some of Leray's global weak solutions exhibit regularity up to the level of the initial data. In this context, the notion of suitable weak solutions—those satisfying an additional local energy inequality—was introduced and studied. In particular, alongside the pioneering works \cite{V1,V2} by Scheffer, the so-called partial regularity theory for suitable weak solutions has been intensively developed since the groundbreaking paper \cite{CKN} by Caffarelli–Kohn–Nirenberg, and continues to be an active area of research. We also refer to Lin \cite{F}, who provided a simplified proof of this theory. Given the vast literature on this topic, we do not attempt to present a comprehensive list of references here.

\vspace{0.1in}

\noindent \textbf{$\bullet$ Motivation for the hyperdissipative Navier–Stokes equations with $\alpha > 1$}

\vspace{0.1in}

To gain a different perspective on the regularity problem, one may view fluid evolution as a competition between dissipation and convection, and then ask how strong the dissipation needs to be to ensure global regularity. Assuming that $\alpha$ encodes the intensity of dissipation, such a question is equivalent to asking which values of $\alpha$ guarantee global regularity. A partial answer was provided by Lions \cite{Lions}, who showed that if $\alpha \geq \frac{5}{4}$, the system \eqref{fNSE} admits a global smooth solution for any sufficiently decaying smooth initial data. This motivates the study of how to quantify the regularizing effect of dissipation in terms of $\alpha$, with the goal of bridging the gap between results known for $\alpha = 1$ and those for $\alpha > 5/4$. Throughout this paper, we restrict our attention to the range
\[
1 < \alpha < \frac{5}{4}.
\]

\vspace{0.1in}

\noindent \textbf{$\bullet$ Fractal dimensions of the singular set}

\vspace{0.1in}

Rather than directly proving persistence of regularity or the formation of singularities, one may instead attempt to quantify how potential singularities can be distributed in space-time. To this end, a couple of notions of \textit{fractal dimension} may be adopted. Fractal dimensions are indices that quantify the complexity of a fractal set in terms of how sensitively the “details” of its shape change as the observational scale varies. The term “fractal dimension” was once referred to as “fractional dimension” (see \cite{Besi}, for example). The earliest rigorous notion of fractal dimension is the Hausdorff dimension, introduced by Hausdorff in 1918 \cite{Haus} and later developed by Besicovitch \cite{Besi}. Later, the box-counting dimension—also known as the Minkowski or Minkowski–Bouligand dimension—was rigorously defined by Bouligand in 1928 \cite{Bouligand}, though its heuristic form had already appeared in the work of Minkowski \cite{Minkowski} in 1910. These two notions (see Definitions~\ref{def_haus_dim}–\ref{def_box_dim} for the precise definitions used in this paper, and the subsequent discussion on their differences) have been widely used to describe the structure of the potential singular set of solutions to the incompressible Navier–Stokes equations. See, for example, \cite{CKN,CDM,KP,KY,K,KP2,KO,Ozanski,RS,V1,V2,TY,WW,WY} and references therein. Specifically, one studies upper bounds on the parabolic Hausdorff and parabolic box-counting dimensions of the singular set, defined as the set of points in $\mathbb{R}^3 \times \mathbb{R}$ at which the solution may become discontinuous. The idea of analyzing the space-time fractal dimension of the singular set appears to have been introduced by Scheffer \cite{V1,V2} and significantly developed in the work \cite{CKN} by Caffarelli–Kohn–Nirenberg. Earlier, Leray \cite{Leray} had considered time singular sets.

For the box-counting dimension, which is our main focus in this paper, the earliest upper bound for \emph{suitable weak solutions} with $\alpha = 1$ (see Definition~\ref{def:suitable} for $\alpha > 1$) is $5/3 \approx 1.666$, due to Robinson–Sadowski \cite{RS}. This was later refined to $135/82 \approx 1.646$ by Kukavica \cite{K}, by employing an $\varepsilon$-regularity criterion (cf. Proposition~\ref{prop_epsilon}) that is \emph{not} scale-invariant. Kukavica and Pei \cite{KP2} improved the bound to $45/29 \approx 1.55$ by incorporating pressure gradient estimates into the local energy inequality. Subsequently, Koh–Yang \cite{KY} designed a new iteration scheme to lower the bound to $95/63 \approx 1.508$. Building upon \cite{KY}, Wang–Wu \cite{WW} combined the iteration method with pressure gradient estimates (as in \cite{KP2}) to refine the bound to $360/277 \approx 1.3$. This was further improved to $975/758 \approx 1.286$ by Ren–Wang–Wu \cite{RWW}, who used a new $\varepsilon$-regularity criterion introduced by Guevara–Phuc \cite{GP}, indicating that the key to improvement lies in developing better regularity criteria. This approach led to further refinements: He–Wang–Zhou \cite{HWZ} obtained $2400/1903 \approx 1.261$, and the current best known upper bound, $7/6 \approx 1.167$, was established by Wang–Yang \cite{WY}.

\vspace{0.1in}

\noindent \textbf{$\bullet$ Background and previous results for $\alpha \neq 1$}

\vspace{0.1in}

In the recent work \cite{CDM}, which largely inspired this paper, Colombo, De Lellis, and Massaccesi derived an upper bound
\begin{equation}\label{Lbound}
     L(\alpha):=\frac{15 - 2\alpha - 8\alpha^2}{3}, \quad \text{for } 1 < \alpha < \frac{5}{4},
\end{equation}
for the box-counting dimension of the potential singular set of suitable weak solutions to \eqref{fNSE} in the hyperdissipative regime. This provides a quantitative explanation of how singularities may be suppressed by dissipation as $\alpha$ increases. Notably, the bound \eqref{Lbound} recovers $5/3$ when $\alpha = 1$, thereby generalizing the result of Robinson–Sadowski \cite{RS}, which used the classical $\varepsilon$-regularity criterion of Ladyzhenskaya–Seregin \cite{LS}. This opens the possibility of extending the refinements achieved in the aforementioned works for $\alpha = 1$, which motivates the present study. Specifically, we aim to generalize the result of Wang and Wu \cite{WW} to the full range of hyperdissipation $\alpha \in (1, 5/4)$. This extension is natural in light of the fact that \cite{WW} provides the best known upper bound among results relying on Ladyzhenskaya–Seregin type $\varepsilon$-regularity criteria, and such a criterion has recently been established by Colombo–De Lellis–Massaccesi \cite{CDM} (see Proposition~\ref{prop_epsilon}) for the range $\alpha \in (1, 5/4)$.

We also refer to \cite{Ozanski}, where the fractal dimensions of the singular set are studied in the setting of Leray–Hopf weak solutions for $\alpha \in (1, 5/4)$. In addition, \cite{KO} extends the upper bound \eqref{Lbound} to the hypodissipative case $\alpha \in (3/4, 1)$, while \cite{TY} had previously shown that the $(5 - 4\alpha)$-dimensional Hausdorff measure of the singular set is zero for any suitable weak solution in the same regime.

\vspace{0.1in}

\noindent \textbf{$\bullet$ The goal of this paper}

\vspace{0.1in}

The goal of this paper is to provide a more precise description of the potential singularities of suitable weak solutions to \eqref{fNSE} by sharpening the upper bound $L(\alpha)$ established in \cite{CDM}. Specifically, we show that the box-counting dimension of the set of potential singular points is bounded above by
\[
J(\alpha):=\frac{36(3 - \alpha)(3 + 2\alpha)(5 - 4\alpha)}{-64\alpha^3 + 272\alpha^2 - 300\alpha + 369}
\]
for all $\alpha \in (1, 5/4)$. See the following figure:

\vspace{0.3in}

\begin{tikzpicture}
\begin{axis}[ xlabel={$\alpha$},ylabel={ $\operatorname{dim}_B (\mathcal{S})$}
,axis x line=bottom
,axis y line=left
,samples=150, thick
,xmin=1, xmax=13/10, ymin=0, ymax=2 
,legend pos=outer north east
]
\node at (axis cs:1,1.7)[anchor=west] {\small{$L(1)=\frac{5}{3}$}};
\addplot+[mark=none,domain=1:1.3] {5 - x*2/3 - x^2*8/3};
\addlegendentry{Colombo–De Lellis–Massaccesi \cite{CDM}};
\node at (axis cs:1,360/277)[anchor=west] {\small{$J(1)=\frac{360}{277}$}};
\node at (axis cs:1.15,1/4)[anchor=west] {\small{$J(\frac{5}{4})=L(\frac{5}{4})=0$}};
\addplot+[mark=none,domain=1:1.25]
{(-288*x^3 + 792*x^2 + 756*x - 1620)/(64*x^3 - 272*x^2 + 300*x - 369)};
\addlegendentry{$J(\alpha)$};
\addplot[fill=white,draw=black,mark=*] coordinates{(1,5/3)};
\addplot[fill=white,draw=black,mark=*] coordinates{(1,360/277)};
\addplot[fill=white,draw=black,mark=*] coordinates{(1.25,0)};
\end{axis}
\end{tikzpicture} \\

This bound $J(\alpha)$ successfully generalizes the result of \cite{WW} to the hyperdissipative regime $1 < \alpha < 5/4$, and sharpens the previous upper bound \eqref{Lbound} given in \cite{CDM}.


\subsection{Statement of our main result}\label{sec:main}

In this section, we collect some definitions and state our main result. Following \cite{CDM}, we first define Leray-Hopf weak solutions and then, based on the notion of Leray-Hopf weak solutions, we employ an ``extension" theorem introduced in \cite{Yang} to define suitable weak solutions. After we recall the definition of the box-counting dimension, we finally state our main result.
\subsubsection{Preliminaries}
We recall the definition of Leray-Hopf weak solutions to \eqref{fNSE} from \cite{CDM}.
\begin{definition}
Let $u_0\in L^2(\R^3)$ be a given divergence-free initial condition. A pair $(u,p)$ is a Leray-Hopf weak solution of \eqref{fNSE} on $\R^3\times(0,T)$ if the followings hold:
\begin{enumerate}
    \item $u\in L^{\infty}((0,T),L^2(\R^3))\cap L^2((0,T),H^{\alpha})$;
    \item $u$ solves \eqref{fNSE} in the sense of distributions;
    \item $p$ is the potential-theoretic solution of $-\Delta p = \operatorname{div}\operatorname{div}(u\otimes u);$
    \item $u$ satisfies the global energy inequality
    \begin{equation*}
\frac{1}{2}\int|u|^2(x,t)\,dx+\int_{s}^{t}\int |(-\Delta)^{\alpha/2}u|^2(x,\tau)\,dx\,d\tau \leq \frac{1}{2}\int|u_0|^2(x)\,dx\quad
    \end{equation*}
    for almost every $s$ and every $t>s$.
\end{enumerate}
\end{definition}

\noindent To define suitable weak solutions of \eqref{fNSE}, we should leverage the following extension theorem. This is mainly due to the non-locality of the fractional Laplacian involved. Here, we use $\overline{\nabla}$, $\overline{\Delta}$ for operators defined on $\R^{4}_+$ that are analogous to $\nabla,$ $\Delta$ respectively.
\begin{proposition}[\cite{Yang}]\label{thm:yang}
Let $u\in H^{\alpha}(\R^3)$ with $\alpha\in(1,2)$ and set $b:=3-2\alpha.$ Define the differential operator $\overline{\Delta}_b$ by
\begin{equation*}
    \overline{\Delta}_bu^{\ast}:=\overline{\Delta}u^{\ast}+\frac{b}{y}\partial_y u^{\ast}=\frac{1}{y^b}\overline{\operatorname{div}}(y^b\overline{\nabla}u^{\ast}).
\end{equation*}
Then there exists a unique extension $u^{\ast}$ of $u$ in the weighted space $L_{loc}^{2}(\R_{+}^{4},y^b)$ which satisfies $\overline{\Delta_b}u^{\ast}\in L^2(\R_{+}^{4},y^b)$ and
\begin{equation}\label{eq_ext}
    \overline{\Delta}_b^2 u^{\ast}(x,y)=0 
\end{equation}
and the boundary conditions
\begin{align*}
    u^{\ast}(x,0)&=u(x)\\
    \lim_{y\to 0}y^{1-\alpha}\partial_y u^{\ast}(x,y)&=0.
\end{align*}
Moreover, there is a constant $c_{\alpha}$ depending only on $\alpha$, such that
\begin{enumerate}
    \item the fractional Laplacian $(-\Delta)^{\alpha}u$ is given by the formula \begin{equation*}
        (-\Delta)^{\alpha}u(x)=c_{\alpha}\lim_{y\to 0}y^b\partial_y \overline{\Delta}_b u^{\ast}(x,y);
    \end{equation*}
    \item the following energy identity holds \begin{equation}\label{identity}
        \int_{\R^3}|(-\Delta)^{\alpha/2}u|^2\,dx = \int_{\R^3}|\xi|^{2\alpha}|\widehat{u}(\xi)|^2\,d\xi=c_{\alpha}\int_{\R_{+}^{4}}y^b|\overline{\Delta}_b u^{\ast}|^2\,dx\,dy
    \end{equation}
    \item for every extension $v\in L_{loc}^2 (\R_{+}^{4},y^b)$ of $u$ with $\overline{\Delta}_b v \in L^2 (\R_{+}^{4},y^b)$
    \begin{equation}\label{minimality}
        \int_{\R_{+}^{4}}y^b|\overline{\Delta}_b u^{\ast}|^2\,dx\,dy \leq \int_{\R_{+}^{4}}y^b|\overline{\Delta}_b v|^2\,dx\,dy.
    \end{equation}
\end{enumerate}
\end{proposition}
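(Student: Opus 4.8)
The plan is to reduce the fourth-order degenerate problem to a one-parameter family of ordinary differential equations by taking the Fourier transform in the $x$-variable, to solve each ODE explicitly in terms of modified Bessel functions, and then to read off the three asserted properties from the small-$y$ asymptotics together with Plancherel's theorem. Writing $\rho=|\xi|$ and $\widehat{u}^{\ast}(\xi,y)$ for the partial Fourier transform of $u^{\ast}$ in $x$, the operator $\overline{\Delta}_b$ becomes
\begin{equation*}
\mathcal{L}_\rho := \partial_y^2 + \frac{b}{y}\partial_y - \rho^2,
\end{equation*}
so that \eqref{eq_ext} turns into the fourth-order ODE $\mathcal{L}_\rho^2\,\widehat{u}^{\ast}=0$, while the two boundary conditions read $\widehat{u}^{\ast}(\xi,0)=\widehat{u}(\xi)$ and $\lim_{y\to0}y^{1-\alpha}\partial_y\widehat{u}^{\ast}=0$. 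Since $\mathcal{L}_\rho$ is covariant under the rescaling $y\mapsto y/\rho$ (it maps to $\rho^2\mathcal{L}_1$), it suffices to understand a single universal profile at $\rho=1$ and to propagate everything by scaling.

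To solve the ODE, observe that the second-order equation $\mathcal{L}_\rho w=0$ becomes, after the substitution $w(y)=y^{\alpha-1}W(\rho y)$, the modified Bessel equation of order $\nu=\alpha-1\in(0,\tfrac14)$; its unique (up to scale) finite-energy solution is the one built from $K_\nu$, which decays exponentially as $y\to+\infty$ and whose small-argument expansion contributes exactly the two local behaviours $y^0$ and $y^{2\alpha-2}$. The indicial roots of $\mathcal{L}_\rho^2$ at $y=0$ are $\{0,\,2\alpha-2,\,2,\,2\alpha\}$, so the non-growing solution space of $\mathcal{L}_\rho^2\widehat{u}^{\ast}=0$ is two-dimensional. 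The Dirichlet condition fixes the coefficient of $y^0$ to be $\widehat{u}(\xi)$; differentiating the local expansion shows that $y^{1-\alpha}\partial_y(\cdot)$ stays bounded on the $y^0,y^2,y^{2\alpha}$ modes but blows up on the $y^{2\alpha-2}$ mode, so the Neumann-type condition forces the coefficient of $y^{2\alpha-2}$ to vanish. These two conditions single out a unique admissible solution $\widehat{u}^{\ast}(\xi,y)=\widehat{u}(\xi)\,\Phi(\rho y)$, where $\Phi$ solves $\mathcal{L}_1^2\Phi=0$ with $\Phi(0)=1$, no $Y^{2\alpha-2}$ term, and exponential decay; this yields existence and uniqueness in the weighted class, the integrability of $y^b|\overline{\Delta}_b u^{\ast}|^2$ near $y=0$ following from $b=3-2\alpha>-1$.

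For the three properties, set $\widehat{g}:=\mathcal{L}_\rho\widehat{u}^{\ast}=\widehat{u}(\xi)\,\rho^2\,\Psi(\rho y)$, where $\Psi:=\mathcal{L}_1\Phi$ solves $\mathcal{L}_1\Psi=0$ and has the expansion $\Psi(Y)=d_0+d_1Y^{2\alpha-2}+\cdots$ near $Y=0$. A direct computation using $b+2\alpha-3=0$ gives
\begin{equation*}
\lim_{y\to0}y^b\partial_y\widehat{g} = (2\alpha-2)\,d_1\,\rho^{2\alpha}\,\widehat{u}(\xi),
\end{equation*}
which, after inverting the Fourier transform and normalizing the constant, is precisely property (1). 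For the energy identity (2) I would compute $\int_0^\infty y^b|\widehat{g}|^2\,dy = C\,|\widehat{u}(\xi)|^2\rho^{2\alpha}$ by the same $Y=\rho y$ scaling, the integral $\int_0^\infty Y^b|\Psi|^2\,dY$ converging because $\Psi$ decays exponentially and $Y^b$ is integrable at $0$; integrating over $\xi$ and applying Plancherel matches $\int|\xi|^{2\alpha}|\widehat{u}|^2$ and fixes the corresponding $c_\alpha$. Finally, minimality (3) follows from the variational viewpoint: the functional $v\mapsto\int y^b|\overline{\Delta}_b v|^2$ is convex with Euler--Lagrange equation $\overline{\Delta}_b^2 v=0$, and the boundary term produced when integrating by parts against an admissible variation is annihilated exactly by the natural condition $\lim_{y\to0}y^{1-\alpha}\partial_y u^{\ast}=0$; hence $u^{\ast}$ is the unique minimizer among all extensions of $u$.

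The main obstacle is the \emph{connection problem} for the Bessel solutions: one must track the precise small-$y$ asymptotics of $K_\nu$ across the two exponents $y^0$ and $y^{2\alpha-2}$ to identify the coefficient $d_1$ and to confirm that the finite-energy requirement at $y=+\infty$ is compatible with—indeed forces—the exclusion of the $y^{2\alpha-2}$ mode demanded by the boundary condition. A secondary technical point is justifying the interchange of the $y$-integration by parts with the $\xi$-integration uniformly in $\rho$, so that the frequency-localized identities assemble into the global statements (1)--(3); this requires care near $\rho=0$ and at $y=0$, but is controlled by the explicit exponential decay of $\Phi$ and $\Psi$ and by the integrability afforded by $b>-1$.
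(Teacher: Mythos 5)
This proposition carries the citation \cite{Yang} and the paper supplies no proof of it --- it is imported wholesale from R.~Yang's extension paper --- so there is no internal argument to measure yours against. On its own terms, your outline is the standard (and essentially the correct) route: Fourier transform in $x$, reduction of $\overline{\Delta}_b^2u^{\ast}=0$ to the family $\mathcal{L}_\rho^2\widehat{u}^{\ast}=0$ with $\mathcal{L}_\rho=\partial_y^2+\tfrac{b}{y}\partial_y-\rho^2$, identification of the indicial exponents $\{0,2\alpha-2,2,2\alpha\}$, and selection of the profile by the Dirichlet condition together with the observation that $y^{1-\alpha}\partial_y$ blows up precisely on the $y^{2\alpha-2}$ mode. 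Your computations for property (1) (the cancellation $b+2\alpha-3=0$ producing $(2\alpha-2)d_1\rho^{2\alpha}\widehat{u}$) and for the $\rho^{2\alpha}$ scaling in property (2) check out. (Minor slip: for $\alpha\in(1,2)$ the Bessel order is $\nu=\alpha-1\in(0,1)$, not $(0,\tfrac14)$; and at $\alpha=\tfrac32$ the exponents differ by integers, so logarithmic modes must be ruled out separately.)

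Three places need real work beyond what you wrote. First, minimality (3): after integrating by parts twice against $\phi=v-u^{\ast}$, the cross term leaves the boundary contribution $-\lim_{y\to0}\int_{\R^3}(\overline{\Delta}_bu^{\ast})\,y^b\partial_y\phi\,dx$, which involves the \emph{competitor's} weighted normal derivative; the admissible class only gives $v\in L^2_{loc}(y^b)$ with $\overline{\Delta}_bv\in L^2(y^b)$ and provides no pointwise control of $y^b\partial_yv$ at $y=0$, so the assertion that the boundary term is ``annihilated by the natural condition on $u^{\ast}$'' does not close as stated --- one must either run the one-dimensional minimization frequency by frequency, where the asymptotics are explicit, or approximate $v$ by admissible competitors for which the trace makes sense. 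Second, the statement asserts a \emph{single} constant $c_\alpha$ serving (1), (2) (and the same $c_\alpha$ reappears in Definition~\ref{def:suitable}); your argument produces two a priori different constants, $(2\alpha-2)d_1$ and $\int_0^\infty Y^b|\Psi|^2\,dY$, whose equality is exactly the integration-by-parts identity tying the Dirichlet-to-Neumann map to the energy and must be verified rather than ``normalized''. Third, uniqueness: the exponentially growing mode $y^{\alpha-1}I_\nu(\rho y)\in\ker\mathcal{L}_\rho$ is locally square integrable and satisfies $\overline{\Delta}_b(\cdot)=0\in L^2$, so it is not excluded by the literal membership conditions; it is eliminated either by its $y^{2\alpha-2}$ leading behaviour violating the Neumann condition or by the precise energy class used in \cite{Yang}, and your appeal to ``the non-growing solution space'' is silently doing that work. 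None of this invalidates the strategy, but these are the points at which the proof actually lives.
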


\noindent The above extension allows us to define suitable weak solutions. In the below definition, we use Einstein's convention for the sum on repeated indices in the following definition.

\begin{definition}\label{def:suitable} Let $1<\alpha<2.$
A Leray-Hopf weak solution $(u,p)$ on $\R^3\times[0,T]$ is a suitable weak solution if the following inequality holds a.e. $t\in[0,T]$ and all nonnegative test functions $\varphi\in C_c^{\infty}(\R_{+}^{4}\times(0,T))$ with $\partial_y\varphi(\cdot,0,\cdot)=0$ in $\R^3\times(0,T):$
\begin{multline*}
    \int_{\R^3}\varphi(x,0,t)|u|^2\,dx+ 2c_{\alpha}
    \int_{0}^{t}\int_{\R_{+}^{4}}y^b|\overline{\Delta}_b u^{\ast}|^2\varphi\,dx\,dy\,ds \\
\leq \int_{0}^{t}\int_{\R^3}\bigg[|u|^2\partial_t\varphi|_{y=0} + (|u|^2+p)u\cdot\nabla\varphi|_{y=0} \bigg]\,dx\,ds \\ - 2c_{\alpha}\int_{0}^{t}\int_{\R_{+}^{4}}y^b \overline{\Delta}_b u_{i}^{\ast}(2\overline{\nabla}\varphi\cdot \overline{\nabla}u_{i}^{\ast} +u_{i}^{\ast}\overline{\Delta}_b \varphi)\,dx\,dy\,ds,
\end{multline*}
where the constant $c_{\alpha}$ depends only on $\alpha$ and comes from the previous extension proposition.
\end{definition}

To define the box-counting dimension and the parabolic Hausdorff dimension, we first denote parabolic cylinders by $$Q_r(z)=Q_r(x,t)=B_r(x)\times(t-r^{2\alpha},t],$$
for any $z\in \R^3\times (0,T).$ We also set $$Q_r:=Q_r(0,0) \,\,\mbox{and}\,\, B_r:=B_r(0).$$
Let $E$ denote any given subset of $\R^3\times \R$. Then we can define the parabolic Hausdorff dimension as follows. 
\begin{definition}\label{def_haus_dim}
 For fixed $\delta>0,$ denote by $\mathscr{C}(E,\delta)$ the collection of all coverings of $\{Q_{r_j}(z_j)\}$ that covers $E$ with $0<r_j\leq \delta.$  The \emph{parabolic Hausdorff measure} of $E$ is defined as
\begin{equation*}
    \mathcal{H}^{\beta}(E)=\lim_{\delta\to0} \inf_{\mathscr{C}(E,\delta)}\sum_{j}r_j^{\beta}
\end{equation*}
and the \emph{parabolic Hausdorff dimension} of $E$ is defined by
\begin{equation*}
    \operatorname{dim}_{\mathcal{H}}(E)=\inf\{\beta:\mathcal{H}^{\beta}(E)=0\}.
\end{equation*}
\end{definition}
\noindent We notice that the above standard Hausdorff dimension sometimes doesn't capture certain important features of sets. For example, let us consider the analogous $1$-dimensional Hausdorff dimension. Then for $A=\{\frac{1}{n}\}_{n\in\N}\cup\{0\}\subset\R$ which has an accumulation point at $0$ we still have that $A$'s Hausdorff dimension is zero. This motivates us to investigate a stronger notion called the box-counting dimension which, in principle, might provide more information on $E$ compared to the usual Hausdorff dimension.
\begin{definition}\label{def_box_dim} The (upper) \emph{parabolic box-counting dimension} of $E$ is defined as
   \begin{equation*}
       \operatorname{dim}_{\mathcal{B}} (E) = \limsup_{r\to 0} \frac{\log \mathcal{N}(A,r)}{-\log r}
   \end{equation*}
where we denote by $\mathcal{N}(A,r)$ the minimum number of parabolic cylinders $Q_r(z)$ required to cover $A.$
\end{definition}
\noindent We see that if we consider again the aforementioned set $A$ and its analogous $1$-dimensional box-counting dimension, then we can conclude that the box-counting dimension of $A$ is $1/2$, not zero unlike its Hausdorff dimension. In general, one can check that
\begin{equation*}
    \operatorname{dim}_{\mathcal{H}}(E) \leq \operatorname{dim}_{\mathcal{B}}(E)
\end{equation*}
 and the inequality could be strict. See \cite{F} for more details.

 We end this subsection by stating the $\varepsilon$-regularity theorem that serves as a key ingredient of the proof of our main result formulated in the next subsection (see Theorem~\ref{thm:box} and the proof of Proposition~\ref{prop_hyper}).

\begin{proposition}[\cite{CDM}]\label{prop_epsilon}
There exists a positive constant $\varepsilon_0$, depending only on $\alpha$, such that the following holds: if $(u,p)$ is a suitable weak solution to \eqref{fNSE} and satisfies
\begin{equation*}
    \frac{1}{r^{6-4\alpha}}\int_{Q_{r}}|u|^3+|p-[p]_{r}|^{3/2}\,dx\,dt + \mathcal{T}(r) < \varepsilon_0,
\end{equation*}
then $u\in C^{0,\kappa}(Q_{r/2};\R^3)$ for some $\kappa>0.$
\end{proposition}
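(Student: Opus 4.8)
The plan is to prove this as a Caffarelli–Kohn–Nirenberg type $\varepsilon$-regularity statement, using the Yang extension of Proposition~\ref{thm:yang} to convert the nonlocal dissipation $(-\Delta)^{\alpha}$ into a local, degenerate-elliptic problem on the half-space $\R^4_+$ with weight $y^b$. The hypothesis is built from scale-invariant quantities: under the natural scaling $u_\lambda(x,t)=\lambda^{2\alpha-1}u(\lambda x,\lambda^{2\alpha}t)$, $p_\lambda(x,t)=\lambda^{4\alpha-2}p(\lambda x,\lambda^{2\alpha}t)$ the equation \eqref{fNSE} is invariant, and since the parabolic volume scales like $r^{3+2\alpha}$, both $C(r):=r^{-(6-4\alpha)}\int_{Q_r}|u|^3$ and $D(r):=r^{-(6-4\alpha)}\int_{Q_r}|p-[p]_r|^{3/2}$ are dimensionless. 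Hence, after rescaling so that $r=1$, it suffices to show that the smallness of $C(r)+D(r)+\mathcal{T}(r)$ forces geometric decay of this excess at all smaller scales, and then to invoke a parabolic Campanato embedding.

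First I would extract a local energy (Caccioppoli) inequality from the suitability inequality of Definition~\ref{def:suitable}. Testing against a smooth cutoff $\varphi$ of the extended cylinder $Q^{\ast}(8r)$ with $\partial_y\varphi|_{y=0}=0$, the left-hand side controls the scale-invariant energy $A(r/2)\simeq\sup_t r^{-(1+2\alpha)}\int_{B_{r/2}}|u|^2$ and the extended dissipation $E(r/2)\simeq r^{-(3+2\alpha)}\int_{Q^{\ast}(r/2)}y^b|\overline{\Delta}_b u^{\ast}|^2$, while the right-hand side splits into the transport/pressure terms $\int(|u|^2+p)u\cdot\overline{\nabla}\varphi|_{y=0}$ on $\R^3$ and the extension cross terms $\int y^b\,\overline{\Delta}_b u_i^{\ast}\,(2\overline{\nabla}\varphi\cdot\overline{\nabla}u_i^{\ast}+u_i^{\ast}\overline{\Delta}_b\varphi)$ on $\R^4_+$. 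These cross terms are precisely where the nonlocal tail $\mathcal{T}(r)$ enters; by Cauchy–Schwarz in the weighted space together with the energy identity \eqref{identity} and the minimality property \eqref{minimality}, they are absorbed into a small fraction of the dissipation plus $\mathcal{T}(r)$, yielding $A(r/2)+E(r/2)\lesssim C(r)+D(r)+\mathcal{T}(r)$.

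Next I would prove the excess-decay lemma, which is the analytic core: there exist $\theta\in(0,1)$ and $\varepsilon_0>0$ so that $C(r)+D(r)+\mathcal{T}(r)<\varepsilon_0$ implies $C(\theta r)+D(\theta r)+\mathcal{T}(\theta r)\leq\tfrac12\big(C(r)+D(r)+\mathcal{T}(r)\big)$. I would argue by contradiction and compactness: from a sequence of suitable weak solutions whose excess tends to zero but which fail the decay at scale $\theta r$, rescale to unit size and normalize the excess; the uniform bound from the previous step plus the pressure estimate give weak compactness, and the limit solves the constant-coefficient linear problem — the degenerate extension system $\overline{\Delta}_b^2 u^{\ast}=0$ coupled to the linear Stokes/pressure relation — for which interior estimates force the required oscillation decay, contradicting the assumed failure. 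The pressure is treated by the decomposition $p=p_{\mathrm{loc}}+p_{\mathrm{harm}}$, where $-\Delta p_{\mathrm{loc}}=\partial_i\partial_j(\chi\,u_iu_j)$ is bounded by $|u|^2$ via Calderón–Zygmund and $p_{\mathrm{harm}}$ is harmonic, hence with oscillation controlled by a higher power of $\theta$; an interpolation of Gagliardo–Nirenberg type in the weighted space $L^2(\R^4_+,y^b)$, combined with the trace estimate of Proposition~\ref{thm:yang}, converts the energy bound into control of $\int|u|^3$ on the smaller cylinder and closes the loop.

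Finally, iterating the decay lemma starting from the hypothesis $C(r)+D(r)+\mathcal{T}(r)<\varepsilon_0$ gives $C(\theta^k r)+D(\theta^k r)+\mathcal{T}(\theta^k r)\lesssim 2^{-k}$, hence $\rho^{-(6-4\alpha)}\int_{Q_\rho}|u|^3\lesssim\rho^{\beta}$ for some $\beta>0$ and all small $\rho$; the parabolic Campanato/Morrey characterization, with the anisotropic scaling $t\sim x^{2\alpha}$, then yields $u\in C^{0,\kappa}(Q_{r/2};\R^3)$ for some $\kappa>0$. The main obstacle throughout is the nonlocality, which the extension localizes at two nontrivial costs: making the local energy inequality genuinely scale-invariant while absorbing the weighted cross terms and the tail $\mathcal{T}(r)$ uniformly in $r$, and identifying and exploiting the interior regularity of the \emph{fourth-order} degenerate limit problem $\overline{\Delta}_b^2 u^{\ast}=0$ (rather than the classical second-order Stokes system) inside the compactness step; the remaining structure is the standard Caffarelli–Kohn–Nirenberg iteration, cf.\ \cite{CKN,CDM}.
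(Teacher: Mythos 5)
You should first note what you are being compared against: the paper offers no proof of Proposition~\ref{prop_epsilon} at all --- it is imported verbatim from \cite{CDM}, and the only internal traces of its proof are the ingredients quoted later (the local energy inequality with the tail functional in Lemma~\ref{lem:EI}, the weighted extension embedding, the pressure decomposition), which belong to a direct CKN-style iteration on scale-invariant quantities rather than to the blow-up/compactness scheme you propose. Your architecture (extension, Caccioppoli step, excess decay, iteration) is the right shape, but as written it has a genuine gap at the final step, and that gap is fatal. Decay of $\mathcal{C}(\rho)=\rho^{-(6-4\alpha)}\int_{Q_\rho}|u|^3$ is a \emph{Morrey}-type bound on $|u|$, not a Campanato (mean-oscillation) bound, and it yields neither H\"older continuity nor even local boundedness. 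Quantitatively: iterating your decay lemma gives $\int_{Q_\rho}|u|^3\lesssim\rho^{6-4\alpha+\beta}$ with $\beta>0$ small, whereas the parabolic Campanato characterization of $C^{0,\kappa}$ requires $\int_{Q_\rho(z)}|u-(u)_{Q_\rho(z)}|^3\lesssim\rho^{3+2\alpha+3\kappa}$ at all nearby centers $z$; since the parabolic volume is $\rho^{3+2\alpha}$ and $6-4\alpha<3+2\alpha$ for all $\alpha>\tfrac12$, your conclusion only gives $\dashint_{Q_\rho}|u|^3\lesssim\rho^{3-6\alpha+\beta}$, which \emph{diverges} as $\rho\to0$ for $\alpha\in(1,\tfrac54)$. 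To close this you must either (a) run the compactness lemma on the mean-subtracted excess $\rho^{-(3+2\alpha)}\int_{Q_\rho}|u-(u)_{Q_\rho}|^3$, which forces you to linearize around the frozen mean --- a drift term $(u)_{Q_r}\cdot\nabla$ in the limit problem, plus a tail functional perturbed by the Galilean shift --- or (b) do what \cite{CKN,CDM} do: a second iteration centered at \emph{every} point of $Q_{r/2}$ (where $\mathcal{T}$ must again be controlled uniformly), producing an $L^\infty$ bound first and H\"older regularity afterwards from linear theory for the fractional operator. The sentence ``Campanato then yields $C^{0,\kappa}$'' conceals roughly half of the actual proof.

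There is a second, structural problem with the compactness route: the tail. $\mathcal{T}(\theta r)$ involves $\sup_{R\geq\theta r/4}$ and is therefore \emph{not} determined by the behavior of $u$ in $Q_r$, so no blow-up limit can produce the claimed decay $\mathcal{T}(\theta r)\leq\tfrac12\,\mathcal{T}(r)$; the contradiction scheme simply does not see this term. Its decay must be proved directly from the definition, e.g.\ by splitting the supremum into $R\geq r/4$ --- where the prefactor $r^{5\alpha-2}$ and the shortened time interval give a factor $\theta^{5\alpha-2}$ times $\mathcal{T}(r)$ --- and $\theta r/4\leq R\leq r/4$, which is controlled by $C_\theta\,\mathcal{A}(r)$ and then closed through the energy step; you gesture at ``absorbing'' the tail but supply no such mechanism. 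Relatedly, in your Caccioppoli step the cross terms $\int y^b\,\overline{\Delta}_b u_i^\ast\,(2\overline{\nabla}\varphi\cdot\overline{\nabla}u_i^\ast+u_i^\ast\overline{\Delta}_b\varphi)$ cannot be handled by Cauchy--Schwarz, the identity \eqref{identity}, and the minimality property \eqref{minimality} alone: the dissipation controls $\int y^b|\overline{\Delta}_b u^\ast|^2$, not $\int y^b|\overline{\nabla}u^\ast|^2$ on compact regions of $\R^4_+$, and bounding the latter by local quantities plus the tail is a separate, nontrivial extension lemma (Lemma 3.4 of \cite{CDM}, subsumed here into Lemma~\ref{lem:EI}). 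So the outline is repairable, but the repairs are precisely the technical content of the proof in \cite{CDM}.
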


\subsubsection{Main result}

 We call a point $z\in\R^3\times\R$ \emph{regular} if a given Leray-Hopf weak solution $(u,p)$ is continuous in some cylinder $Q_r(z).$ Any point which is not regular is called \emph{singular.} We denote by $\mathcal{S}$ the set of all singular points.  Now, we state our main theorem: 
 \begin{theorem}\label{thm:box} Let $1<\alpha<5/4$. If $(u,p)$ is a suitable weak solution of \eqref{fNSE}, then the box-counting dimension of $\mathcal{S}$ is bounded by $J(\alpha)$ defined by \begin{equation}\label{Jbound}
    J(\alpha)= \frac{36(3-\alpha)(3+2\alpha)(5-4\alpha)}{-64\alpha^3+272\alpha^2-300\alpha+369}
 \end{equation}
 \end{theorem}
 
 \noindent \emph{Remarks on Theorem \ref{thm:box}.}
 \begin{enumerate}
 \item The result immediately follows from Proposition~\ref{prop_hyper} and such implication is standard; see \cite{K,KY,WW} for examples. Still, we provide a proof in Section~\ref{sec_proof} for the reader's convenience.
   \item The proof works only when $L(\alpha)>0$ due to its nature of \textit{improving the existing bounds}.
   \item The currently available best upper bound for the full Laplacian case $\alpha=1$ is 7/6 given in \cite{WY}. We do not attempt here to generalize such a result since the method of \cite{WY} heavily relies on a particular form of $\varepsilon$-regularity theorem which is \emph{not} currently available for the hyperdissipative case $\alp>1$.
 \end{enumerate}
\noindent For any $f:\R^3\times(0,T)\to[0,\infty)$, we denote by $\mathcal{M}f$ the maximal function
\begin{equation*}
    \mathcal{M}f(x,t)=\sup_{r>0}\frac{1}{r^3}\int_{B_r(x)}f(y,t)\,dy.
\end{equation*}
 We also define the extended cylinder $Q^{\ast}(r)$ by $Q^{\ast}(r)=Q(r)\times[0,r)$. Then our key proposition is stated as the following.
 \begin{proposition}\label{prop_hyper} Let $1<\alpha<5/4$ and set $b:=3-2\alpha$.
 There exists a positive constant $\varepsilon$, depending only on $\alpha,$ such that the following holds: for each $\gamma<L(\alpha)-J(\alpha)$, there exists $0<\rho_0<1$ such that if $(u,p)$ is a suitable weak solution which satisfies
 \begin{equation*}
 \begin{split}
      \int_{Q^{\ast}(z,8\rho)}y^b |\overline{\Delta}_b u^{\ast}|^{2}&\,dz^\ast+\int_{Q(z,8\rho)}\Big(\mathcal{M}|u|^2\Big)^{\frac{3+2\alpha}{3}} \\
&+\int_{Q(z,8\rho)}|\nabla u|^2\,dz+|p-[p]_{2\rho}|^{\frac{3+2\alpha}{3}}+|\nabla p|^{\frac{3+2\alpha}{4}}\,dz \leq (8\rho)^{L(\alpha)-\gamma}\varepsilon
\end{split}
 \end{equation*}
for some $0<\rho<\rho_0,$ then $z$ is a regular point.
 \end{proposition}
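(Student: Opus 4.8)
The plan is to prove Proposition~\ref{prop_hyper} as a quantitative $\varepsilon$-regularity statement in the spirit of the direct iteration scheme of \cite{KY}: the scale-invariant smallness assumed at the single scale $\rho$ is propagated to all smaller scales, producing a Morrey--Campanato decay that forces $u$ to be H\"older continuous in a neighborhood of $z$, hence regular. The structural input is the local energy inequality satisfied by every suitable weak solution of \eqref{fNSE}. Because $(-\Delta)^\alpha$ is nonlocal, I would phrase this inequality in the extended half-space, using the Caffarelli--Silvestre-type (higher-order) extension $u^\ast$ attached to the weight $y^{b}$, $b=3-2\alpha$, and the degenerate weighted operator $\overline{\Delta}_b$ governing the fourth-order extension problem appropriate to $\alpha\in(1,\tfrac54)\subset(1,2)$. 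Localizing with a cutoff adapted to the parabolic cylinders $Q_r(z)=B_r(x)\times(t-r^{2\alpha},t]$, the inequality bounds the supremal energy together with the extension dissipation $\int y^b|\overline{\Delta}_b u^\ast|^2$ on $Q_r$ by the corresponding quantities on a concentric larger cylinder, plus the convection contribution $\int |u|^2\,u\cdot\nabla\phi$ and the pressure contributions.

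I would then package the invariant quantities at scale $r$---the energy, the dissipation $\int|\nabla u|^2$, the extension energy, the velocity term $\int(\mathcal{M}|u|^2)^{(3+2\alpha)/3}$, and the pressure terms coming from $|p-[p]_{2r}|^{(3+2\alpha)/3}$ and $|\nabla p|^{(3+2\alpha)/4}$---into a single functional $\Phi(r)$, each piece normalized to respect the scaling $u\mapsto\lambda^{2\alpha-1}u(\lambda\,\cdot,\lambda^{2\alpha}\,\cdot)$; the hypothesis is then precisely $\Phi(8\rho)\le(8\rho)^{L(\alpha)-\gamma}\varepsilon$. The core is a one-step estimate of the schematic form
\[
\Phi(\theta r)\ \le\ C\,\theta^{\,\kappa}\,\Phi(r)\ +\ C\,\theta^{-N}\,\Phi(r)^{1+\delta},
\]
valid for a fixed $\theta\in(0,1)$ and some $\delta>0$, in which the linear term is made to decay and the superlinear term is absorbed thanks to the initial smallness. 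Iterating from $r=8\rho$---where the surplus decay $(8\rho)^{L(\alpha)-\gamma}$, with $L(\alpha)-\gamma>J(\alpha)>0$, furnishes the margin that keeps $\Phi(\theta^k\cdot 8\rho)$ controllably small at every stage---yields geometric decay $\Phi(\theta^k\cdot 8\rho)\lesssim\theta^{k\mu}$ with $\mu>0$. This is a Morrey bound, and the Campanato characterization upgrades it to $u\in C^{\beta}(Q_{r_0}(z))$, so $z$ is regular.

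The step I expect to be the principal difficulty is the one-step decay estimate, and within it two coupled issues. First, the nonlocality forbids a purely interior bound: one must retain the weighted extension and estimate $\int y^b|\overline{\Delta}_b u^\ast|^2$ by the interior data, which is more delicate here than in the standard second-order Caffarelli--Silvestre setting because for $\alpha\in(1,2)$ the operator $\overline{\Delta}_b$ is itself a second-order weighted Laplacian and the governing extension problem is fourth order, so the trace and interpolation estimates across $\{y=0\}$ must be handled with the correct weights. Second, the cubic convection term $\int|u|^2\,u\cdot\nabla\phi$ must be controlled without spending the gain $\gamma$; this is exactly where the maximal function is indispensable, since applying H\"older against $\big(\mathcal{M}|u|^2\big)^{(3+2\alpha)/3}$ trades the convection term for a product of the controlled velocity quantity and a dissipation factor, at the price of tracking the exponents so that the resulting powers of $r$ balance. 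The pressure terms are handled by the Calder\'on--Zygmund decomposition of $p$ and contribute analogous factors. Making all these exponents close with a strictly positive decay rate $\mu>0$ for every $\gamma<L(\alpha)-J(\alpha)$ is the quantitative heart of the argument, and is precisely what sharpens the exponent from $L(\alpha)$ to $J(\alpha)$.
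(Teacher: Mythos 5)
Your high-level ingredients (iteration over scales, the weighted extension for the nonlocality, the maximal function for the tail, Calder\'on--Zygmund for the pressure) match the paper's, but the architecture you propose is genuinely different and, as stated, omits the step that actually produces $J(\alpha)$. The paper does not prove a one-step decay $\Phi(\theta r)\le C\theta^{\kappa}\Phi(r)+C\theta^{-N}\Phi(r)^{1+\delta}$ for a fixed $\theta\in(0,1)$ followed by absorption and a Campanato upgrade to H\"older continuity. Instead it runs a \emph{finite} chain $r_k=\rho^{\eta+k\zeta}$ whose ratio $\theta=\rho^{\zeta}$ itself depends on $\rho$, propagates only the pressure through the linear recursion of Lemma~\ref{lem:pressure}, bounds each $\mathcal{C}(r_k)$ directly from the top-scale quantities $\mathcal{A}(\rho)$, $\mathcal{E}(\rho)$, $\mathcal{T}(\rho)$ via Lemma~\ref{interpolation} (there is no superlinear term to absorb), and then verifies $\mathcal{C}(r_N)+\mathcal{D}(r_N)+\mathcal{T}(r_N)<\varepsilon_0$ at the single scale $r_N$ so that the already-known $\varepsilon$-regularity theorem, Proposition~\ref{prop_epsilon}, can be invoked --- it never rederives H\"older continuity. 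The exponent $J(\alpha)$ emerges precisely from the algebra of this construction: the five constraints \eqref{five} on $(\eta,\zeta,N,\gamma)$ coming from $J_1,J_2,J_3\ge 0$, $\eta\ge 1$, and \eqref{gamma2}, together with the balancing choice \eqref{N} of $N\zeta$. A fixed-ratio decay-plus-absorption scheme has no mechanism to output this specific rational function of $\alpha$; you flag ``making the exponents close'' as the quantitative heart, but that heart is exactly what the proposition asserts, and your proposal defers it rather than supplying it.

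There is a second, more structural problem with packaging everything into one decaying functional $\Phi(r)$: the quantities $\int(\mathcal{M}|u|^2)^{\frac{3+2\alpha}{3}}$ and $\int|\nabla p|^{\frac{3+2\alpha}{4}}$ are only \emph{hypothesized} small at the initial scale and are never propagated to smaller scales in the paper --- nor can they be in any simple way, since $\mathcal{M}|u|^2$ at a point sees $u$ at all radii and does not localize, and there is no self-improving recursion for $\nabla p$ analogous to Lemma~\ref{lem:pressure}. In the paper they enter only as static top-scale inputs (in the tail estimate \eqref{tail}, the energy estimates \eqref{EI1}--\eqref{EI4}, and the bound \eqref{step1} on $\mathcal{D}(r_0)$). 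Including them in a functional that must satisfy a one-step decay estimate would make that estimate false or unprovable. To repair the proposal you would need to separate the propagated quantities ($\mathcal{C}$, $\mathcal{D}$) from the static ones, replace the fixed ratio by the $\rho$-dependent ratio $\rho^{\zeta}$, and carry out the exponent bookkeeping that yields \eqref{N}.
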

 \begin{remark}
     The above estimate is \emph{not} scale-invariant, which drives the improvement from $L(\alpha)$ to $J(\alpha).$
 \end{remark}
 \begin{remark}Non-locality of the fractional Laplacian is encoded in the first two integrals.
 \end{remark}

 \subsection{Proof of Theorem \ref{thm:box}}\label{sec_proof}
 \begin{proof}[Proof of Theorem \ref{thm:box}.]
  Our strategy is to employ Proposition~\ref{prop_hyper}. Assume, for a contradiction, that $$\dim_{\mathcal{B}}(\mathcal{S})>J(\alpha).$$ Then we can find a constant $\delta\in (J(\alpha),\operatorname{dim}_B(\mathcal{S}))$.  Thanks to the dimension of $\operatorname{dim}_B(\mathcal{S})$(see Definition~\ref{def_box_dim}), there exists a positive sequence $a_i\to 0$ such that
\begin{equation}\label{sequence}
    \mathcal{N}(\mathcal{S},a_i) > \Big(\frac{1}{a_i}\Big)^{\delta}.
\end{equation}
Denote by $(z_j)_{j=1}^{\mathcal{N}(\mathcal{S},a_i)}\subset \mathcal{S}$ an arbitrary collection of points in $\mathcal{S}$ satisfying
\begin{equation*}
    Q_{a_i}(z_{k}) \cap Q_{a_i}(z_{l}) = \emptyset
\end{equation*}
for any $1\leq k,l\leq \mathcal{N}(\mathcal{S},a_i) $ with $k\neq l$.  We observe that Theorem \ref{prop_hyper} implies
\begin{equation*}
\int_{Q_{a_i}^{\ast}(z_j)}y^b |\overline{\Delta}_b u^{\ast}|^{2}\,dz^{\ast}+\int_{Q_{a_i}(z_j)}|\nabla u|^2 +\Big(\mathcal{M}|u|^2\Big)^{\frac{3+2\alpha}{3}}+|p-[p]_{2\rho}|^{\frac{3+2\alpha}{3}}+|\nabla p|^{\frac{3+2\alpha}{4}} \,dz > a_i^{L(\alpha)-\gamma}\varepsilon
\end{equation*}
for any $z_j$ and any $\gamma<L(\alpha)-J(\alpha)$. Next, summing it up over $j$ gives
\begin{gather*}
    \sum_{j=1}^{\mathcal{N}(\mathcal{S},a_i)}\left( \int_{Q_{a_i}^{\ast}(z_j)}y^b |\overline{\Delta}_b u^{\ast}|^{2}\,dz^{\ast}+\int_{Q_{a_i}(z_j)}|\nabla u|^2+\Big(\mathcal{M}|u|^2\Big)^{\frac{3+2\alpha}{3}}+|p-[p]_{2\rho}|^{\frac{3+2\alpha}{3}}+|\nabla p|^{\frac{3+2\alpha}{4}} \,dz\right) \\
    \geq \mathcal{N}(\mathcal{S},a_i)a_i^{L(\alpha)-\gamma}\varepsilon.
\end{gather*}
Due to the pairwise disjointness of the family $\{Q_{a_i}(z_j)\}_{j=1}^{\mathcal{N}(\mathcal{S},a_i)}$, one can find an absolute constant $K>0$ such that the left-hand side is bounded by $K$: for example, we have $\nabla p \in L^{\frac{3+2\alpha}{4}}(\R^3\times (0,T)),$ see Proposition~\ref{pressure5/4} in Appendix. The finiteness of $\|\nb u\|_{L^2(\R^3\times(0,T))}$ also can be derived from the simple inequality 
$\|\nb u\|_{L^2(\R^3\times(0,T))}^2 \leq \| u\|_{L^2(\R^3\times(0,T))}^2+\|(-\Delta)^{\frac{\alp}{2}}u\|_{L^2(\R^3\times (0,T))}^2$ by decomposing the Fourier side into $|\xi|\leq 1$ and $|\xi|>1$, combined with the definition of Leray-Hopf weak solution.

For the right-hand side, we use \eqref{sequence} to get
\begin{equation*}
    \mathcal{N}(\mathcal{S},a_i)a_i^{L(\alpha)-\gamma}\varepsilon > a_i^{L(\alpha)-\gamma-\delta}\varepsilon
\end{equation*}
which leads to
\begin{equation*}
    K > a_i^{L(\alpha)-\gamma-\delta}\varepsilon.
\end{equation*}
Pick $\gamma=L(\alpha)-\frac{\delta+J(\alpha)}{2}$. Since $\delta$ has been chosen to be bigger than $J(\alpha),$ we immediately see that such $\gamma$ satisfies $\gamma<L(\alpha)-J(\alpha).$ Moreover, we know that $\delta\leq L(\alpha)$ because $L(\alpha)$ is the previously known upper bound for $\operatorname{dim}_{\mathcal{B}}(\mathcal{S}).$ It means that $\gamma\geq 0,$ which guarantees the validity of our choice of $\gamma.$ Therefore, with the chosen $\gamma$, we have
\begin{equation*}
    K > a_i^{\frac{J(\alpha)-\delta}{2}}\varepsilon.
\end{equation*}
Letting $i\to\infty$ gives a contradiction against the above inequality, as desired.
\end{proof}

\subsection{Outline of the paper}
In Section~\ref{sec_prelim}, we gather up some preliminaries. Key iteration lemmas are proved in Section~\ref{sec_iteration}. We cover the proof of Proposition~\ref{prop_hyper} in Section~\ref{sec_proof_main}. Appendix is provided at the end of this paper.

\subsection{Notations} We denote implicit but absolute constants arising in estimates by $C$, which may vary from line to line.

\subsection*{Data Availability \& Conflict of Interest Statements} Data sharing not applicable to this article as no datasets were generated or analysed during the current study.  The authors declare that they have no conflicts of interest.

\subsection*{Acknowledgements} MJJ was partially supported by NSF DMS-2043024. The author thanks Jae-Myoung Kim for his insightful comments.

\section{Basic lemmas}\label{sec_prelim}

Here we gather some known estimates. One is the local-in-time supremum of $L^2$ energy of $u$ measured on a ball, and the other is the local-in-space fractional Sobolev embedding in the fahsion of Yang's extension introduced in the previous section.

\begin{lemma}\label{lem:EI}
If $(u,p)$ is a suitable weak solution of \eqref{fNSE} in $\R^3\times[-(\frac{4}{3}r)^{2\alpha},0],$ then we have
\begin{multline}\label{EI}
    \sup_{-r^{2\alpha}\leq t \leq 0}\frac{1}{r^{5-4\alpha}}\int_{B_r}|u|^2\,dx + \frac{1}{r^{5-4\alpha}} \int_{Q_r^{\ast}}y^b|\overline{\Delta}_b u^{\ast}|^2\,dx\,dy\,dt \\
    \leq \frac{C}{r^{5-2\alpha}}\int_{Q_{\frac{4}{3}r}}|u|^2\,dx\,dt + \frac{C}{r^{6-4\alpha}}\int_{Q_{\frac{4}{3}r}}|u|\Big(|u|^2-g(t)+|p|\Big)\,dx\,dt \\
    + C r^{5\alpha-2}\int_{-(4r/3)^{2\alpha}}^{0}\sup_{R\geq r}R^{-3\alpha}\dashint_{B_R}|u|^2\,dx\,dt
\end{multline}
for any $g\in L^1([0,(\frac{4r}{3})^{2\alpha}]).$
\end{lemma}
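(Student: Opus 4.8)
The plan is to derive \eqref{EI} directly from the local energy inequality built into the definition of a suitable weak solution (Definition~\ref{def:suitable}) by testing it against a suitably localized weight. After a harmless time translation placing the top of the cylinder at $t=0$, I would choose $\varphi(x,y,t)=\phi(x,y)\,\zeta(t)$, where $\phi$ is a cutoff on the extended half-space $\R_+^4$ with $\phi\equiv 1$ on $B_r\times[0,r)$, $\supp\phi\subset B_{4r/3}\times[0,4r/3)$, $\partial_y\phi(\cdot,0)=0$ (so that $\varphi$ is admissible), and the natural bounds $|\overline{\nabla}\phi|\lesssim r^{-1}$, $|\overline{\Delta}_b\phi|\lesssim r^{-2}$; and $\zeta$ is a temporal cutoff vanishing near $t=-(4r/3)^{2\alpha}$, equal to $1$ on $[-r^{2\alpha},0]$, with $|\zeta'|\lesssim r^{-2\alpha}$. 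Evaluating the inequality at an arbitrary final time $\tau\in[-r^{2\alpha},0]$ and using $\varphi(\cdot,0,\tau)\geq\chi_{B_r}$ together with $\varphi\geq\chi_{Q_r^{\ast}}$ on the dissipation term, the left side controls $\int_{B_r}|u(\tau)|^2\,dx+2c_{\alpha}\int_{Q_r^{\ast}}y^b|\overline{\Delta}_b u^{\ast}|^2$; taking the supremum over $\tau$ and finally dividing by $r^{5-4\alpha}$ produces exactly the left side of \eqref{EI}.

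The two boundary (trace) terms are routine. The term $\int|u|^2\partial_t\varphi|_{y=0}$ is bounded using $|\partial_t\varphi|\lesssim r^{-2\alpha}$ by $r^{-2\alpha}\int_{Q_{4r/3}}|u|^2$, which after the division by $r^{5-4\alpha}$ yields the first term $Cr^{-(5-2\alpha)}\int_{Q_{4r/3}}|u|^2$. For the convection--pressure term $\int(|u|^2+p)u\cdot\nabla\varphi|_{y=0}$, I would first exploit $\operatorname{div}u=0$: for any $g$ depending only on $t$, $\int g(t)\,u\cdot\nabla\varphi|_{y=0}\,dx=-\int g(t)(\operatorname{div}u)\varphi|_{y=0}\,dx=0$, so $|u|^2$ may be freely replaced by $|u|^2-g(t)$ at no cost; bounding $|\nabla\varphi|\lesssim r^{-1}$ then gives $Cr^{-(6-4\alpha)}\int_{Q_{4r/3}}|u|\big(|u|^2-g(t)+|p|\big)$ after the division, i.e. the second term of \eqref{EI}.

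The crux is the remaining interior term $-2c_{\alpha}\int y^b\overline{\Delta}_b u_i^{\ast}\big(2\overline{\nabla}\varphi\cdot\overline{\nabla}u_i^{\ast}+u_i^{\ast}\overline{\Delta}_b\varphi\big)$, which carries the nonlocality. Its integrand is supported where $\overline{\nabla}\phi$ and $\overline{\Delta}_b\phi$ do not vanish, i.e. in the transition region away from the part of $\R_+^4$ on which $\phi\equiv1$. On this region I would \emph{not} attempt to absorb into the left-hand dissipation (which is localized to $Q_r^{\ast}$); instead I would invoke the extension of Proposition~\ref{thm:yang}: since $u^{\ast}$ solves the degenerate fourth-order equation $\overline{\Delta}_b^2 u^{\ast}=0$ from \eqref{eq_ext} with boundary datum $u$, interior weighted regularity for $y\gtrsim r$ together with the representation of $u^{\ast}$ from distant boundary data controls the weighted $L^2$ norms of $u^{\ast}$, $\overline{\nabla}u^{\ast}$, and $\overline{\Delta}_b u^{\ast}$ on the support of the cutoff derivatives by spatial averages of the trace $u$ over balls $B_R$ with $R\gtrsim r$. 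Pairing these (via Cauchy--Schwarz in $(x,y)$) against the $r^{-1}$ and $r^{-2}$ factors from $\overline{\nabla}\varphi,\overline{\Delta}_b\varphi$, and tracking the weight $y^b=y^{3-2\alpha}$ over $y\in(0,4r/3)$, collapses the interior term into the scale-invariant tail $\sup_{R\geq r}R^{-3\alpha}\dashint_{B_R}|u|^2$; after dividing by $r^{5-4\alpha}$ the prefactor organizes into $Cr^{5\alpha-2}$, and integrating in time over $[-(4r/3)^{2\alpha},0]$ gives the third term of \eqref{EI}.

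The main obstacle is precisely this last step: converting the interior integrals of $u^{\ast}$ and its first two weighted derivatives into the single tail $\sup_{R\geq r}R^{-3\alpha}\dashint_{B_R}|u|^2$ with the correct power $r^{5\alpha-2}$. This requires the quantitative decay estimates for Yang's extension, controlling the contribution of distant boundary data to $u^{\ast}(x,y)$ when $y\sim r$, and careful power-counting of the weight $y^{3-2\alpha}$ against the $r$-scaling of $\overline{\nabla}\varphi$ and $\overline{\Delta}_b\varphi$; by contrast, the localization of the energy inequality and the trace terms are standard.
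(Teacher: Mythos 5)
Your first two steps (localizing the suitable-weak-solution inequality with a product cutoff $\varphi=\phi(x,y)\zeta(t)$, and the observation that $\int g(t)\,u\cdot\nabla\varphi|_{y=0}\,dx=0$ by incompressibility, which is exactly how the free function $g$ enters) are sound and match the underlying mechanics. But note that the paper does not argue this way at all: its proof is a two-line reduction, rescaling to unit scale via $u_{\frac43 r}$ and then citing Lemmas 3.2 and 3.4 of [CDM] for the unit-scale inequality (plus H\"older to pass from $\dashint_{B_R}|u|$ to $(\dashint_{B_R}|u|^2)^{1/2}$ in the tail). So what you are proposing is to reprove the cited [CDM] lemmas from scratch, and it is precisely at the content of those lemmas that your argument has a genuine gap.

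The gap is your treatment of the interior term $\int y^b\,\overline{\Delta}_b u_i^{\ast}\,(2\overline{\nabla}\varphi\cdot\overline{\nabla}u_i^{\ast}+u_i^{\ast}\overline{\Delta}_b\varphi)$. You assert that ``interior weighted regularity for $y\gtrsim r$'' plus the representation of $u^{\ast}$ from distant boundary data controls the weighted $L^2$ norms of $u^{\ast}$, $\overline{\nabla}u^{\ast}$ \emph{and} $\overline{\Delta}_b u^{\ast}$ on $\operatorname{supp}\overline{\nabla}\varphi$ by spatial averages of $|u|^2$. This fails on a concrete portion of that support: with $\phi\equiv 1$ on $B_r\times[0,r)$ and $\operatorname{supp}\phi\subset B_{4r/3}\times[0,4r/3)$, the set $\{\overline{\nabla}\phi\neq 0\}$ contains the collar $\{r\le|x|\le 4r/3\}\times(0,4r/3)$, which reaches all the way down to $y=0$, where no interior regularity for $\overline{\Delta}_b^2u^{\ast}=0$ is available. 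Moreover, $\int y^b|\overline{\Delta}_b u^{\ast}|^2$ is (by the energy identity \eqref{identity}) an $\dot H^{\alpha}$-type quantity of the trace; it cannot be bounded by $L^2$ averages of $u$ over balls, so the mechanism you describe cannot close, and a naive Cauchy--Schwarz instead produces a dissipation integral at scale $4r/3$ which is absent from the right-hand side of \eqref{EI} and cannot be absorbed into the left-hand side (which only sees $Q_r^{\ast}$). What is actually needed --- and what [CDM] supplies --- is to exploit the equation $\overline{\Delta}_b^2u^{\ast}=0$ and the minimality \eqref{minimality} of Yang's extension through integration by parts and Caccioppoli/interpolation inequalities for the degenerate fourth-order operator, which convert the first-order quantities $\int y^b|\overline{\nabla}u^{\ast}|^2$ and $\int y^{b-2}|u^{\ast}|^2$ on the transition region into boundary $L^2$ data plus the tail $\sup_{R\ge r}R^{-3\alpha}\dashint_{B_R}|u|^2$ with the stated power of $r$. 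You correctly identify this as the crux, but you do not supply it, and the route you sketch for it would not work as stated; this is exactly the step the paper outsources to [CDM].
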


\begin{proof}
It is enough to show that
\begin{multline}\label{unscaledEI}
    \sup_{-(3/4)^{2\alpha}\leq t\leq 0}\int_{B_{3/4}}|u|^2 + \int_{Q_{3/4}^{\ast}}y^b |\overline{\Delta}_b u^{\ast}|^2 \,dx\,dt\,dt \\
    \leq C \int_{Q_1}|u| + C\int_{Q_1} |u| \Big(|u|^2-f(t)+|p|\Big) + C\int_{-1}^{0} \sup_{R\geq 1}R^{-3\alpha}\dashint_{B_R}|u|^2\,dx
\end{multline}
because we can recover \eqref{EI} by applying \eqref{unscaledEI} to the scaled solution
$$u_{\frac{4}{3}r}(x,t):=(4r/3)^{2\alpha-1}u\Big((4r/3) x, (4r/3)^{2\alpha}t\Big)$$
supplemented with
$$f(t):=(4r/3)^{4\alpha-2}g((4r/3)^{2\alpha}t\Big).$$
Indeed, \eqref{unscaledEI} is a direct consequence of Lemma 3.2 of \cite{CDM} and Lemma 3.4 in the same reference. Notice that $\Big(\dashint_{B_R}|u|\,dx\Big)^{2} \leq \dashint_{B_R}|u|^2\,dx$ by H\"older's inequality. The proof is complete.\end{proof}

To exploit the fractional dissipation in a natural way, we need certain fractional Sobolev embeddings. The issue is that the spatially-local Sobolev embeddings for the scale-invariant quantities cannot be directly applied due to The non-locaity of $(-\Delta)^{\alpha}$. This can be overcome by the extension-wise embedding lemma. See Lemma 4.4 in \cite{CDM}.  
\begin{lemma}[\cite{CDM}]
Let $(u,p)$ be a suitable weak solution. For $0<r\leq \frac{\rho}{2}$, there holds 
  \begin{equation*}
      \|u\|_{L^{\frac{6}{3-2\alpha}}(B_r)}^2 \lesssim
    \int_{B_\rho^{\ast}}y^b|\overline{\Delta}_b u^\ast|^2\,dx\,dy + \rho^{\alpha+3}\sup_{R\geq \frac{\rho}{4}}R^{-3\alpha}\dashint_{B_R}|u|^2\,dx \quad \forall \alpha\in(1,\frac{5}{4}).
  \end{equation*}  
\end{lemma}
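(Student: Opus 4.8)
The plan is to reduce this local fractional Sobolev inequality to the extension setting of Proposition~\ref{thm:yang} and then invoke a standard (integer-order) Sobolev embedding on the half-space $\R^4_+$ adapted to the weighted measure $y^b\,dx\,dy$. The target norm $\|u\|_{L^{6/(3-2\alpha)}(B_r)}$ is exactly the critical Lebesgue exponent associated with the scaling of the $H^\alpha$ energy in three dimensions, so the right-hand side should be controlled by the extension energy $\int y^b|\overline{\Delta}_b u^\ast|^2$ plus a lower-order term accounting for the non-local tail, which is precisely the shape of the claimed bound.

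First I would set up a cutoff $\eta\in C_c^\infty(B_\rho)$ with $\eta\equiv 1$ on $B_r$ (using $r\le \rho/2$ to leave room for the gradient of $\eta$ to be supported in the annulus $B_\rho\setminus B_r$ and to satisfy $|\overline\nabla\eta|\lesssim \rho^{-1}$). Since $u(x)=u^\ast(x,0)$ is the trace of the extension, I would control the trace $\|u\|_{L^{6/(3-2\alpha)}(B_r)}$ by a weighted Sobolev–trace estimate: the exponent $6/(3-2\alpha)$ is the critical trace exponent for the weighted space $W^{1,2}(\R^4_+,y^b)$ when one accounts for the weight $b=3-2\alpha$, so one expects
\begin{equation*}
\|u\|_{L^{6/(3-2\alpha)}(B_r)}^2 \lesssim \int_{B_\rho^\ast} y^b\,|\overline\nabla u^\ast|^2\,dx\,dy + (\text{lower order}).
\end{equation*}
The natural route is to apply the trace theorem to $\eta u^\ast$ and expand $\overline\nabla(\eta u^\ast)=\eta\,\overline\nabla u^\ast+u^\ast\,\overline\nabla\eta$, so the error term involves $\int y^b |u^\ast|^2|\overline\nabla\eta|^2$, localized to the annulus.

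Next I would replace the first-order quantity $\int_{B_\rho^\ast} y^b|\overline\nabla u^\ast|^2$ by the second-order one $\int_{B_\rho^\ast} y^b|\overline\Delta_b u^\ast|^2$ that actually appears in the statement. This is the step where the specific structure of Yang's extension enters: because $u^\ast$ solves the fourth-order equation \eqref{eq_ext}, it is the $\overline\Delta_b$-harmonic-type minimizer, and an integration by parts against $y^b$ (using the operator identity $\overline\Delta_b u^\ast = y^{-b}\overline{\operatorname{div}}(y^b\overline\nabla u^\ast)$ and the boundary condition $\lim_{y\to0}y^{1-\alpha}\partial_y u^\ast=0$) converts the Dirichlet energy of $\overline\nabla u^\ast$ into the $L^2(y^b)$ norm of $\overline\Delta_b u^\ast$ modulo boundary and cutoff terms. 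The leftover terms, together with the annular error from the cutoff, are estimated by the scaled local $L^2$-average $\rho^{\alpha+3}\sup_{R\ge \rho/4}R^{-3\alpha}\dashint_{B_R}|u|^2$; the exponents $\alpha+3$ and $R^{-3\alpha}$ are dictated by matching the parabolic scaling $Q_r(z)=B_r\times(t-r^{2\alpha},t]$, i.e. by demanding that both sides transform identically under $x\mapsto \lambda x$, $u\mapsto \lambda^{2\alpha-1}u$.

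\emph{Main obstacle.} The delicate point is the passage from the first-order Dirichlet energy to the second-order $\overline\Delta_b$-energy without losing the correct power of $\rho$ in the remainder. Naively integrating by parts produces uncontrolled boundary contributions at $y=0$ and cross terms on the annulus that are only first order; these must be reabsorbed or bounded by the non-local tail term rather than by $\overline\nabla u^\ast$ itself, which is not in the statement. Controlling these remainders requires using the extension equation \eqref{eq_ext} and the minimality property \eqref{minimality} to trade two derivatives against one, and then carefully tracking that every such trade respects the scaling. I expect this bookkeeping — rather than any single inequality — to be the substantive part, and I would handle it by first proving the unscaled case $\rho=1$, $r=1/2$ (where all constants are absolute) and then recovering the general inequality by the rescaling $u_\rho(x,y):=\rho^{2\alpha-1}u^\ast(\rho x,\rho y)$, exactly as in the proof of Lemma~\ref{lem:EI}.
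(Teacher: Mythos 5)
First, note that the paper does not prove this lemma at all: it is quoted verbatim as Lemma~4.4 of \cite{CDM}, so there is no in-paper argument to compare against. Judged on its own terms, your proposal has a fatal gap at its very first substantive step. The intermediate inequality
\begin{equation*}
\|u\|_{L^{6/(3-2\alpha)}(B_r)}^2 \lesssim \int_{B_\rho^\ast} y^b\,|\overline{\nabla} u^\ast|^2\,dx\,dy + (\text{lower order})
\end{equation*}
is false. The weight is $y^{b}=y^{3-2\alpha}=y^{1-2(\alpha-1)}$, so the first-order Dirichlet energy you invoke is the Caffarelli--Silvestre energy of order $s=\alpha-1\in(0,\tfrac14)$; its trace space is $H^{\alpha-1}(\R^3)$, which embeds only into $L^{6/(5-2\alpha)}$ (an exponent below $12/5$), nowhere near the target $L^{6/(3-2\alpha)}$ (which lies in $(6,12)$). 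A scaling check makes this unambiguous: under $u\mapsto\lambda^{2\alpha-1}u(\lambda\cdot)$, $u^\ast\mapsto\lambda^{2\alpha-1}u^\ast(\lambda\cdot,\lambda\cdot)$, the left-hand side and the $\overline{\Delta}_b$-energy both scale like $\lambda^{6\alpha-5}$, but $\int y^b|\overline{\nabla}u^\ast|^2$ scales like $\lambda^{6\alpha-7}$, so no dimensionless constant can close your step~2. This is precisely why Yang's \emph{second-order} extension is needed for $\alpha>1$ in the first place: the full $\alpha$ derivatives live in $\int y^b|\overline{\Delta}_b u^\ast|^2$, not in the weighted gradient.

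Your step~3 cannot repair this, because it runs the logic backwards: even if you could bound $\int y^b|\overline{\nabla}u^\ast|^2$ by $\int y^b|\overline{\Delta}_b u^\ast|^2$ plus remainders (itself dimensionally mismatched by two powers of length, hence requiring an interpolation with $\rho^{-2}$ times a zeroth-order term rather than a clean integration by parts), you would only have proved the weaker $L^{6/(5-2\alpha)}$ estimate. The actual route in \cite{CDM} is structurally different: apply the \emph{global} embedding $\dot H^{\alpha}(\R^3)\hookrightarrow L^{6/(3-2\alpha)}(\R^3)$ to the localized function $\chi u$ for a cutoff $\chi$, convert $\|(-\Delta)^{\alpha/2}(\chi u)\|_{L^2}^2$ into a $\overline{\Delta}_b$-energy of the Yang extension of $\chi u$ via the identity \eqref{identity}, and then use the minimality property \eqref{minimality} with the competitor extension $\chi^\ast u^\ast$ (a bulk cutoff times $u^\ast$). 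The commutator terms $\int y^b\bigl(|\overline{\nabla}\chi^\ast|^2|\overline{\nabla}u^\ast|^2+|u^\ast|^2|\overline{\Delta}_b\chi^\ast|^2\bigr)$ are then absorbed by Caccioppoli-type estimates for the degenerate equation $\overline{\Delta}_b^2u^\ast=0$, and it is the control of $u^\ast$ in the bulk by its boundary data on balls of all radii that produces the non-local tail $\rho^{\alpha+3}\sup_{R\geq\rho/4}R^{-3\alpha}\dashint_{B_R}|u|^2$. Your proposal does correctly anticipate the roles of the cutoff, the minimality property, and the scaling origin of the tail term, but the load-bearing inequality it rests on does not hold.
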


\section{Iteration lemmas}\label{sec_iteration}
We introduce the following scaling-invariant quantities:
\begin{equation}\label{def_scale}
\begin{split}
    \mathcal{A}(r)&:=\sup_{-r^{2\alpha}\leq t \leq 0}\frac{1}{r^{5-4\alpha}}\int_{B(r)}|u|^2\,dx \\
    \mathcal{C}(r)&:=\frac{1}{r^{6-4\alpha}}\int_{Q_r}|u|^3\,dx\,dt\\
    \mathcal{D}(r)&:=\frac{1}{r^{6-4\alpha}}\int_{Q_r}|p-[p]_{r}|^{3/2}\,dx\,dt\\
    \mathcal{E}(r)&:= \frac{1}{r^{5-4\alpha}}\int_{Q_r^{\ast}}y^b |\overline{\Delta}_b u^{\ast}|^{2}\,dx\,dy\,dt \\
   \mathcal{T}(r)&:=r^{5\alpha-2}\int_{-r^{2\alpha}}^{0}\sup_{R\geq \frac{r}{4}}\frac{1}{R^{3\alpha}}\dashint_{B_R}{|u|^2}\,dt.
\end{split}
\end{equation}
The system \eqref{fNSE} enjoys the scaling property: if $(u,p)$ is a solution pair to \eqref{fNSE}, then $u_{\lambda}(x,t):=\lambda^{2\alpha-1} u (\lambda x, \lambda^{2\alpha}t)$ and $p_\lambda(x,t):= \lambda^{4\alpha-2} p( \lambda x, \lambda^{2\alpha} t)$ solve \eqref{fNSE} as well for any $\lambda>0$. Such a scaling property gives rise to all the quantities in \eqref{def_scale}, except for $\mathcal{E}$ which involves the extension $u^\ast$ of $u$. Setting up $u^{\ast}_{\lambda}:=\lambda^{2\alpha-1} u^\ast (\lambda x, \lambda y, \lambda^{2\alpha} t)$, we see that $u^{\ast}_{\lambda}$ satisfies not only the boundary condition $u^{\ast}_{\lambda}|_{y=0}=u_{\lambda}$, but also the weighted Laplacian \eqref{eq_ext} required for extensions. Due to the uniqueness of the extension $(u_\lambda)^{\ast},$  we obtain that $(u_\lambda)^{\ast} = u_\lambda^{\ast}$. From this spatially $4$-dimensional scaling for $u^{\ast}$ and the scaling for the weight $y^{b}$ via $y^{b}=(\lambda y)^{b} \lambda^{-b}$, the scaling-invariant quantity $\mathcal{E}(r)$ can be derived as in \eqref{def_scale}.


\begin{lemma}\label{interpolation}
Let $\alpha\in(1,5/4)$. Then, for $0<r\leq\frac{1}{2}\rho,$ there holds
\begin{equation}\label{eq_intp1}
    \mathcal{C}(r)\lesssim\Big(\frac{\rho}{r}\Big)^{\frac{15}{2}-6\alpha}\mathcal{A}^{1/2}(\rho)\left(\mathcal{E}(\rho)+\mathcal{T}(\rho)\right)+\Big(\frac{r}{\rho}\Big)^{6\alpha-3}\mathcal{A}^{3/2}(\rho).
\end{equation}
\end{lemma}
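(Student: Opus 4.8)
The plan is to bound $\mathcal{C}(r)$ slicewise in time by a product of one low-integrability factor, controlled by $\mathcal{A}(\rho)$, and one high-integrability factor, controlled by the extension Sobolev embedding, with the H\"older exponents arranged so that the extension norm enters \emph{quadratically}; in this way $\mathcal{E}$ and $\mathcal{T}$ will appear to the first power after integration in time, matching the linear structure of those quantities.

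First I would fix $q:=\frac{6}{3-2\alpha}$ and, for a.e.\ $t$, apply H\"older on $B_r$ to the splitting $|u|^3=|u|\cdot|u|^2$ with exponents $s'$ and $q/2$ satisfying $\frac{1}{s'}+\frac{2}{q}=1$; a direct computation gives $s'=\frac{3}{2\alpha}$. Since $\alpha>\frac34$ we have $s'<2$, so a further H\"older inequality on $B_r$ gives
\[
\int_{B_r}|u|^3\,dx \aleq r^{3(\frac{1}{s'}-\frac12)}\|u\|_{L^2(B_r)}\,\|u\|_{L^q(B_r)}^2 = r^{2\alpha-\frac32}\|u\|_{L^2(B_r)}\,\|u\|_{L^q(B_r)}^2.
\]
For the first factor I would use $\|u\|_{L^2(B_r)}\le\|u\|_{L^2(B_\rho)}\le(\rho^{5-4\alpha}\mathcal{A}(\rho))^{1/2}$, which produces exactly $\mathcal{A}^{1/2}(\rho)$. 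For the second factor I would invoke the extension Sobolev embedding lemma on the time slice, bounding $\|u\|_{L^q(B_r)}^2$ by $\int_{B_\rho^{\ast}}y^b|\overline{\Delta}_b u^{\ast}|^2\,dx\,dy+\rho^{\alpha+3}\sup_{R\ge\rho/4}R^{-3\alpha}\dashint_{B_R}|u|^2\,dx$, which is legitimate because $r\le\frac{\rho}{2}$.

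Next I would integrate in $t$ over $(-r^{2\alpha},0)\subset(-\rho^{2\alpha},0)$. As the $\mathcal{A}^{1/2}$ factor is uniform in time, only the extension term integrates: the Dirichlet part yields $\int_{Q_\rho^{\ast}}y^b|\overline{\Delta}_b u^{\ast}|^2=\rho^{5-4\alpha}\mathcal{E}(\rho)$, and the low-frequency part yields, after the prefactor collapses as $\rho^{\alpha+3}\cdot\rho^{2-5\alpha}=\rho^{5-4\alpha}$, precisely $\rho^{5-4\alpha}\mathcal{T}(\rho)$. Collecting powers and dividing by $r^{6-4\alpha}$, the $r$-exponent is $2\alpha-\frac32-(6-4\alpha)=6\alpha-\frac{15}{2}$ and the $\rho$-exponent is $\frac{5-4\alpha}{2}+(5-4\alpha)=\frac{15}{2}-6\alpha$, which reassembles into the claimed prefactor $(\rho/r)^{15/2-6\alpha}$ multiplying $\mathcal{A}^{1/2}(\rho)\big(\mathcal{E}(\rho)+\mathcal{T}(\rho)\big)$. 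This gives the first, dominant term. The second term $(r/\rho)^{6\alpha-3}\mathcal{A}^{3/2}(\rho)$ is the classical contribution of the spatial average $\overline{u}_\rho$: estimating $|\overline{u}_\rho|\aleq\rho^{1-2\alpha}\mathcal{A}^{1/2}(\rho)$ and integrating $r^3|\overline{u}_\rho|^3$ over the time interval reproduces it exactly, and since it is nonnegative the displayed inequality in any case follows a fortiori from the sharper one-term bound obtained above.

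I expect the main obstacle to be the choice of splitting together with the exponent bookkeeping rather than any deep estimate: the essential point is to avoid the naive interpolation $\|u\|_{L^3}\le\|u\|_{L^2}^{1-\theta}\|u\|_{L^q}^{\theta}$, which forces the fractional power $\mathcal{E}^{3/(4\alpha)}$ and hence an extra H\"older step in time, and instead to place power exactly $2$ on $\|u\|_{L^q}$ via the identity $\frac{1}{s'}+\frac{2}{q}=1$. Verifying $s'<2$ (so the spatial H\"older \emph{gains} a power of $r$) uses $\alpha>1$, and confirming that the low-frequency prefactors collapse to $\rho^{5-4\alpha}$ is the one spot where the specific weights defining $\mathcal{T}(\rho)$ must be tracked with care.
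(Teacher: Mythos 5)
Your proposal is correct and follows essentially the same route as the paper: the same H\"older splitting that places power exactly $2$ on the $L^{6/(3-2\alpha)}$ norm (yielding the factor $r^{2\alpha-3/2}\|u\|_{L^2}\|u\|_{L^{6/(3-2\alpha)}}^2$), the same extension Sobolev embedding lemma, and the same time integration and bookkeeping. The only cosmetic difference is that the paper first decomposes $u=(u-[u]_\rho)+[u]_\rho$ (which is where the second term $(r/\rho)^{6\alpha-3}\mathcal{A}^{3/2}(\rho)$ comes from), whereas you apply the embedding directly to $u$ and correctly observe that the resulting one-term bound implies the stated inequality a fortiori.
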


\begin{proof}
Let $0<r\leq\frac{1}{2}\rho.$ By the triangular inequality we observe that
\begin{equation}\label{triangle}
    \int_{B_r}|u|^3\,dx \lesssim \int_{B_r} |u-[u]_{\rho}|^3\,dx + \int_{B_r}|[u]_{\rho}|^3\,dx.
\end{equation}
The first summand on the right-hand side can be estimated by H\"older's inequality and the Poincar\'e-Sobolev inequality
\begin{equation*}
    \begin{aligned}
   \int_{B_r} |u-[u]_{\rho}|^3\,dx &\lesssim \Big(\int_{B_r}|u-[u]_\rho|^2\,dx\Big)^{1/2}\Big(\int_{B_r}|u-[u]_\rho|^4\,dx\Big)^{1/2} \\
    &\lesssim \|u\|_{L^2(B_\rho)}r^{2\alpha-\frac{3}{2}} \Big(\int_{B_r}|u-[u]_{\rho}|^{\frac{6}{3-2\alpha}}\,dx\Big)^{\frac{3-2\alpha}{3}}\\
    &\lesssim
\|u\|_{L^2(B_\rho)}\cdot r^{2\alpha-\frac{3}{2}} \Big(\int_{B_\rho^\ast}y^b |\overline{\Delta}_b u^{\ast}|^{2}\,dx\,dy +\rho^{\alpha+3} \sup_{R\geq\frac{\rho}{4}}R^{-3\alpha}\dashint_{B_R}|u|^2\,dx\Big).\end{aligned}
\end{equation*}
For the second summand, we notice that $|[u]_\rho|^3 \lesssim \rho^{-9/2}(\int_{B_\rho} |u|^2)^{3/2}$ by H\"older's inequality and so
\begin{equation*}
    \int_{B_r}|[u]_\rho|^3\,dx \lesssim \frac{r^3}{\rho^{9/2}} \Big(\int_{B_\rho}|u|^2\,dx\Big)^{3/2}.
\end{equation*}
Integrating \eqref{triangle} in time over $(-r^{2\alpha},0)$ and multiplying it by $\frac{1}{r^{6-4\alpha}},$ the above estimates finish the proof.
\end{proof}

\begin{lemma}\label{lem:pressure}
For $0<r\leq\frac{1}{2}\rho,$ there holds
\begin{equation*}
    \mathcal{D}(r)\lesssim \Big(\frac{r}{\rho}\Big)^{4\alpha-\frac{3}{2}}\mathcal{D}(\rho)+\Big(\frac{\rho}{r}\Big)^{6-4\alpha}\mathcal{C}(\rho).
\end{equation*}
\end{lemma}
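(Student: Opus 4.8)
The plan is to run the classical Caffarelli–Kohn–Nirenberg pressure decomposition on each fixed time slice and then integrate in $t$; here the hyperdissipative exponent $\alpha$ enters only through the normalizing power $r^{-(6-4\alpha)}$ and through the fact that the time intervals nest, $(-r^{2\alpha},0)\subset(-\rho^{2\alpha},0)$, which is automatic for $r\le\rho$. Fix $t$ and a cutoff $\chi\in C_c^{\infty}(B_\rho)$ with $\chi\equiv 1$ on $B_{\rho/2}$. Using the pressure equation $-\Delta p=\partial_i\partial_j(u_iu_j)$ (Einstein convention), I would split $p=p_1+p_2$, where $p_1:=R_iR_j(\chi\,u_iu_j)$ with $R_iR_j=\partial_i\partial_j(-\Delta)^{-1}$ is the Newtonian potential of the localized source, so that $-\Delta p_1=\partial_i\partial_j(\chi u_iu_j)$, and $p_2:=p-p_1$. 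Since $\chi\equiv 1$ on $B_{\rho/2}$, we have $-\Delta p_2=0$ there, i.e.\ $p_2$ is harmonic in $B_{\rho/2}$.

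For the local part I would invoke the $L^{3/2}$-boundedness of the Calderón–Zygmund operator $R_iR_j$ to obtain $\|p_1\|_{L^{3/2}(\R^3)}^{3/2}\lesssim\|\chi\,u\otimes u\|_{L^{3/2}}^{3/2}\lesssim\int_{B_\rho}|u|^3\,dx$. Because the average minimizes the $L^{3/2}$ deviation up to a universal factor, this gives $\int_{B_r}|p_1-[p_1]_r|^{3/2}\lesssim\int_{B_r}|p_1|^{3/2}\le\int_{\R^3}|p_1|^{3/2}\lesssim\int_{B_\rho}|u|^3\,dx$. For the harmonic part I would use interior gradient estimates for harmonic functions: for $r\le\rho/4$, $\sup_{B_r}|\nabla p_2|\lesssim\rho^{-1}\big(\dashint_{B_{\rho/2}}|p_2-[p_2]_{\rho/2}|^{3/2}\,dx\big)^{2/3}$, so that $\int_{B_r}|p_2-[p_2]_r|^{3/2}\lesssim r^{9/2}(\sup_{B_r}|\nabla p_2|)^{3/2}\lesssim(r/\rho)^{9/2}\int_{B_{\rho/2}}|p_2-[p_2]_{\rho/2}|^{3/2}\,dx$. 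Replacing $[p_2]_{\rho/2}$ by $[p]_\rho$ (minimality of the average) and using $p_2=p-p_1$ with the local bound above, I would conclude $\int_{B_{\rho/2}}|p_2-[p_2]_{\rho/2}|^{3/2}\lesssim\int_{B_\rho}|p-[p]_\rho|^{3/2}+\int_{B_\rho}|u|^3\,dx$. The remaining range $\rho/4<r\le\rho/2$ is trivial, since then the ratios $(r/\rho)$ and $(\rho/r)$ are bounded above and below.

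Finally I would assemble the two contributions at fixed $t$, integrate over $t\in(-r^{2\alpha},0)\subset(-\rho^{2\alpha},0)$, and divide by $r^{6-4\alpha}$. Writing $\int_{Q_\rho}|u|^3=\rho^{6-4\alpha}\mathcal{C}(\rho)$ and $\int_{Q_\rho}|p-[p]_\rho|^{3/2}=\rho^{6-4\alpha}\mathcal{D}(\rho)$, the bookkeeping of powers yields
\[
\mathcal{D}(r)\lesssim\Big(\frac{\rho}{r}\Big)^{6-4\alpha}\mathcal{C}(\rho)+\Big(\frac{r}{\rho}\Big)^{4\alpha-\frac32}\big(\mathcal{D}(\rho)+\mathcal{C}(\rho)\big),
\]
where the exponent $4\alpha-\tfrac32$ arises as $\tfrac92-(6-4\alpha)$. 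Since $r\le\rho$ and $\alpha\in(1,5/4)$, one has $(r/\rho)^{4\alpha-3/2}\le 1\le(\rho/r)^{6-4\alpha}$, so the cross term $(r/\rho)^{4\alpha-3/2}\mathcal{C}(\rho)$ is absorbed into the first term, giving exactly the claimed estimate. The main technical care lies in the harmonic interior estimate together with the repeated replacement of averages by $[p]_\rho$; once those are in place, the scaling exponents follow mechanically from the normalization $r^{-(6-4\alpha)}$.
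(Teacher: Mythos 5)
Your proposal is correct and follows essentially the same route as the paper: the CKN-style slice-wise decomposition $p=p_1+p_2$ with a cutoff, Calder\'on--Zygmund for the local part, interior gradient estimates for the harmonic part, and the same exponent bookkeeping $4\alpha-\tfrac32=\tfrac92-(6-4\alpha)$. The only cosmetic differences are that the paper localizes $(u_i-(u_i)_\rho)(u_j-(u_j)_\rho)$ rather than $u_iu_j$ (which it does not exploit, since it too bounds $p_1$ by $\int_{B_\rho}|u|^3$), and that you spell out more carefully the passage from $p_2$ back to $p$ and the absorption of the cross term $(r/\rho)^{4\alpha-3/2}\mathcal{C}(\rho)$.
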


\begin{proof} Let $\phi$ be a non-negative function supported in $B_{\rho}$, which is identically $1$ on $B_{\rho/2}.$ We decompose $p$ into the sum $p=p_1+p_2$ where $p_1$ is defined by
$$p_1(x,t):=\int_{\R^3}\frac{1}{4\pi|x-y|}\Big\{\partial_{i}\partial_{j}\Big[\Big(u_i-(u_i)_\rho\Big)\Big(u_j-(u_j)_\rho\Big)\phi\Big]\Big\}(y,t)\,dy.$$
By Calderon-Zygmund theory, $p_1$ can be estimated as
\begin{equation*}
    \int_{B_\rho}|p_1|^{3/2}\,dx \leq c \int_{B_\rho}|u|^3\,dx.
\end{equation*}
It leads to
\begin{equation*}
    \int_{B_r}|p_1-[p_1]_r|^{3/2}\,dx \lesssim \int_{B_r}|p_1|^{3/2}\,dx \lesssim \int_{B_\rho}|u|^3\,dx.
\end{equation*}
Therefore, we obtain
\begin{equation}\label{eqp1}
\begin{split}
    \frac{1}{r^{6-4\alpha}}\int_{Q_r}|p_1-[p_1]_r|^{3/2}\,dx\,dt &\lesssim \frac{1}{r^{6-4\alpha}}\int_{-r^{2\alpha}}^{0}\int_{B_r}|u|^3\,dx\,dt \\
    &\leq \Big(\frac{\rho}{r}\Big)^{6-4\alpha}\frac{1}{\rho^{6-4\alpha}}\int_{Q_\rho}|u|^3\,dx\,dt
\end{split}
\end{equation}
On the other hand, we have $\Delta p = \Delta p_1$ in $B_{\rho/2}$ and therefore
$$\Delta p_2 = 0 \quad \mbox{in}\,\,B_{\rho/2}.$$
Being harmonic, $p_2$ satisfies
\begin{align*}
    \sup_{y\in B_r}|p_2-[p_2]_r| &\leq cr \sup_{y\in B_{\rho/2}}|\nabla p_2(y,t)| \\
    &\leq cr\frac{1}{\rho^4}\int_{B_\rho}|p_2-[p_2]_\rho|\,dy \\
    &\leq cr\frac{1}{\rho^3}\Big(\int_{B_\rho}|p_2-[p_2]_\rho|^{3/2}\,dy\Big)^{2/3}.
\end{align*}
It follows that
\begin{equation}\label{eqp2}
    \begin{split}
        \frac{1}{r^{6-4\alpha}}\int_{Q_r}|p_2-[p_2]_{r}|^{3/2}\,dx\,dt &\leq \frac{1}{r^{6-4\alpha}}\int_{-r^{2\alpha}}^{0}\int_{B_r}\Big(\sup_{y\in B_r}|p_2-[p_2]_r|\Big)^{3/2}\,dx\,dt \\
    &\lesssim \frac{1}{r^{6-4\alpha}}\cdot r^3 \cdot r^{3/2} \cdot  \frac{1}{\rho^{9/2}} \int_{-r^{2\alpha}}^{0} \int_{B_\rho} |p_2-[p_2]_\rho|^{3/2}\,dy\,dt \\
    &= \frac{r^{4\alpha-\frac{3}{2}}}{\rho^{9/2}}\int_{-r^{2\alpha}}^{0} \int_{B_\rho} |p_2-[p_2]_\rho|^{3/2}\,dy\,dt \\
    &\leq \Big(\frac{r}{\rho}\Big)^{4\alpha-\frac{3}{2}}\frac{1}{\rho^{6-4\alpha}}\int_{Q_\rho}|p_2-[p_2]_\rho|^{3/2}\,dy\,dt.
    \end{split}
\end{equation}
By the simple identity $p-[p]_r=p_1-[p_1]_r + p_2-[p_2]_r,$ combining it with \eqref{eqp1} and \eqref{eqp2}, we conclude that
\begin{equation*}
    D(r)\lesssim \Big(\frac{r}{\rho}\Big)^{4\alpha-\frac{3}{2}}D(\rho)+\Big(\frac{\rho}{r}\Big)^{6-4\alpha}C(\rho)
\end{equation*}
as desired.
\end{proof}

\section{Proof of Proposition \ref{prop_hyper}}\label{sec_proof_main}
\begin{proof}[Proof of Proposition~\ref{prop_hyper}]
Without loss of generality, we assume that $z=(0,0)$. We further assume that for some fixed $0<\rho<\rho_0<1$
\begin{equation}\label{MainAssumption}
\begin{split}
\int_{Q_{8\rho}^{\ast}}y^b |\overline{\Delta}_b u^{\ast}|^{2}&\,dz^{\ast}+\int_{Q_{8\rho}}|\nabla u|^2\,dz\\
&+\int_{Q_{8\rho}}\Big(\mathcal{M}|u|^2\Big)^{\frac{3+2\alpha}{3}}+|p-[p]_{2\rho}|^{\frac{3+2\alpha}{3}}+|\nabla p|^{\frac{3+2\alpha}{4}}\,dz \leq (8\rho)^{L(\alpha)-\gamma}\varepsilon,
\end{split}
\end{equation}
where $L(\alpha)=\frac{15-2\alpha-8\alpha^2}{3}.$ It suffices to find maximal $\gamma=\gamma(\alpha)$ such that $z$ would be a regular point and then check that $J(\alpha)$ equals $L(\alpha)-\gamma(\alpha).$ We notice that the suitable $\rho$ will be selected later as well.

\vspace{0.05in}

\noindent \textbf{Step 1. Setting up the iteration:}
To invoke the $\varepsilon$-regularity criterion in Proposition \ref{prop_epsilon}, we want to find sufficiently small $r_N>0$ such that
\begin{equation}\label{eq:epsilon}
   \mathcal{C}(r_N)+\mathcal{D}(r_N)+\mathcal{T}(r_N)<\varepsilon_0.
\end{equation}
It proves that $z$ is a regular point. To this end, we design an iteration procedure as follows. For $\eta\geq1$ and $\zeta>0$ that would be determined later, define the sequence
\begin{equation*}
    r_k=\rho^{\eta+k\zeta} \quad \mbox{for}\,\, k=0,1,\ldots,N.
\end{equation*}
The common ratio of this geometric sequence is denoted by $\theta=\rho^{\zeta}.$ We notice that this sequence is a strictly decreasing sequence, and so the proof reduces from finding small $r_N$ to finding large $N>0$ such that \eqref{eq:epsilon} is fulfilled. By applying Lemma \ref{lem:pressure} repeatedly, with the aid of the simple fact that $\mathcal{C}(r)\lesssim \theta^{4\alpha-6}\mathcal{C}(\theta^{-1}r)$, we have
 \begin{equation}\label{eq:iteration}
 \begin{split}
     \mathcal{C}(r_N)+\mathcal{D}(r_N) &\lesssim \theta^{4\alpha-6}\mathcal{C}(r_{N-1})+\mathcal{D}(r_N) \\
    &\lesssim \theta^{4\alpha-6}\mathcal{C}(r_{N-1}) + \theta^{4\alpha-\frac{3}{2}} \mathcal{D}(r_{N-1}) \\
    &\ldots \\
    &\lesssim \sum_{i=1}^{N}\theta^{(4\alpha-\frac{3}{2})(i-1)+4\alpha-6}\mathcal{C}(r_{N-i}) + \theta^{(4\alpha-\frac{3}{2})N}\mathcal{D}(r_0) =: I+II.
 \end{split}
 \end{equation}
Therefore, it suffices to estimate $I,$ $II,$ and $\mathcal{T}(r_N).$
\\
\textbf{Step 2. Estimate on the tail term $\mathcal{T}(r_N).$}  We can leverage the maximal function to control the non-locality of $\mathcal{T}(r_N)$. We first observe that
\begin{equation}\label{simple}
    \sup_{R\geq\frac{r}{4}}\dashint_{B_R} |u|^2 \leq C \int_{B_{\frac{r}{4}}}\dashint_{B_{\frac{r}{2}}(x)}|u|^2\,dy\,dx \leq C \int_{B_{\frac{r}{4}}}\mathcal{M}|u|^2\,dx \quad \mbox{for any}\,\,r>0.
\end{equation}
Then we obtain
\begin{equation}\label{tail}
    \begin{aligned}
    \mathcal{T}(r_N)
    &\leq C r_N^{5\alpha-2}\int_{-(r_N)^{2\alpha}}^{0}\sup_{R\geq \frac{r_N}{4}}R^{-3\alpha}\int_{B_{\frac{r_N}{2}}}\mathcal{M}|u|^2 \,dt\\
    &\leq C r_N^{2\alpha-2} \int_{-(r_N)^{2\alpha}}^{0} \bigg(\int_{B_{\frac{r_N}{2}}}\Big(\mathcal{M}|u|^2\Big)^{\frac{3+2\alpha}{3}}\bigg)^{\frac{3}{3+2\alpha}}r_N^{\frac{6\alpha}{3+2\alpha}}\,dt \\
    &\leq Cr_N^{2\alpha-2+\frac{6\alpha}{3+2\alpha}+\frac{4\alpha^2}{3+2\alpha}} \bigg(\int_{Q_{r_N}}\Big(\mathcal{M}|u|^2\Big)^{\frac{3+2\alpha}{3}} \,dx\,dt\bigg)^{\frac{3}{3+2\alpha}}  \\
    &\leq C\rho^{(\eta+N\zeta)(4\alpha-2)+(L-\gamma)\frac{3}{3+2\alpha}}\varepsilon^{\frac{3}{3+2\alpha}}
    \end{aligned}
\end{equation}
by H\"older's inequality and the assumption \eqref{MainAssumption}. Thus we have
\begin{equation}
    \mathcal{T}(r_N)\leq C \rho^{J_1}\varepsilon^{\frac{3}{3+2\alpha}}
\end{equation}
where we set
\begin{equation*}
    J_1:=(\eta+N\zeta)(4\alpha-2)+(L-\gamma)\frac{3}{3+2\alpha}.
\end{equation*}
\noindent\textbf{Step 3. Energy estimates.} 
Our goal is to use Lemma~\ref{lem:pressure} to control the pressure term $II$. To this end, we need the estimates on $\mathcal{A}(\rho)$ and $\mathcal{E}(\rho)$ in advance. Both terms can be estimated by the local energy inequality which implicitly encodes certain non-locality of $(-\Delta)^{\alpha}$ with the extension $u^{\ast}$. For the term $\mathcal{A}(\rho),$ we employ \eqref{EI} to get
\begin{multline}
    \mathcal{A}(\rho)=\sup_{t\in[-\rho^{2\alpha},0]}\frac{1}{\rho^{5-4\alpha}}\int_{B_\rho}|u|^2 \\
    \leq \frac{C}{\rho^{5-2\alpha}}\int_{Q_{2\rho}}|u|^2 + \frac{C}{\rho^{6-4\alpha}}\int_{Q_{2\rho}}|u|\Big(|u|^2-[|u|^2]_{2\rho}\Big)+\frac{C}{\rho^{6-4\alpha}}\int_{Q_{2\rho}}|u||p-[p]_{2\rho}|\\
    + C\rho^{5\alpha-2}\int_{-(2\rho)^{2\alpha}}^{0}\sup_{R\geq \rho}R^{-3\alpha}\dashint_{B_R}|u|^2\,dx\,dt.
\end{multline}
The first term on the right-hand side is estimated as
\begin{equation}\label{EI1}
\begin{split}
    \frac{1}{\rho^{5-2\alpha}}\int_{Q_{2\rho}}|u|^2 &\leq \frac{1}{\rho^{5-2\alpha}}\Big(\int_{Q_{2\rho}}|u|^{\frac{6+4\alpha}{3}}\Big)^{\frac{3}{3+2\alpha}}|Q_{2\rho}|^{\frac{2\alpha}{3+2\alpha}} \\
    &\leq C \rho^{4\alpha-5+(L-\gamma)\frac{3}{3+2\alpha}}\varepsilon^{\frac{3}{3+2\alpha}}
\end{split}
\end{equation}
by H\"older's inequality and the assumption \eqref{MainAssumption}. For the second term, we use H\"older's inequality and the Poincar\'e-Sobolev inequality combined with \eqref{MainAssumption} to yield
\begin{equation}\label{EI2}
\begin{split}
    \frac{1}{\rho^{6-4\alpha}}\int_{Q_{2\rho}}|u|\Big(|u|^2-[|u|^2]_{2\rho}\Big) &\leq C\rho^{6\alpha-\frac{15}{2}}\Big(\int_{Q_{2\rho}}|u\nabla u|^{\frac{3+2\alpha}{3+\alpha}}\Big)^{\frac{(6+2\alpha)(4\alpha-3)}{4\alpha(3+2\alpha)}}\Big(\int_{Q_{2\rho}}|u|^{\frac{6+4\alpha}{3}}\Big)^{\frac{9-3\alpha}{\alpha(6+4\alpha)}} \\
    &\leq \rho^{6\alpha-\frac{15}{2}+L-\gamma}\varepsilon.
    \end{split}
\end{equation}
The term involving the pressure is computed similarly:
\begin{equation}\label{EI3}
\begin{split}
        \frac{1}{\rho^{6-4\alpha}}&\int_{Q_{2\rho}}|u||p-[p]_{2\rho}| \\
        &\leq C\rho^{4\alpha-6+\frac{4\alpha-3}{2}} \Big(\int_{Q_{2\rho}}|u|^{\frac{6+4\alpha}{3}}\Big)^{\frac{3}{6+4\alpha}}\Big(\int_{Q_{2\rho}}|p-[p]_{2\rho}|^{\frac{3+2\alpha}{3}}\Big)^{\frac{15-12\alpha}{6+4\alpha}}\Big(\int_{Q_{2\rho}}|\nabla p|^{\frac{3+2\alpha}{4}}\Big)^{\frac{16\alpha-12}{6+4\alpha}} \\
    &\leq C\rho^{6\alpha-\frac{15}{2}+L-\gamma }\varepsilon.
    \end{split}
\end{equation}
Here we have used H\"older's inequality, the Poincar\'e-Sobolev inequality, and the assumption \eqref{MainAssumption}. To estimate the fourth term, we observe the simple fact that
$$\sup_{R\geq\frac{r}{4}}\dashint_{B_R} |u|^2 \leq C \int_{B_{\frac{r}{4}}}\dashint_{B_{\frac{r}{2}}(x)}|u|^2\,dy\,dx \leq C \int_{B_{\frac{r}{4}}}\mathcal{M}|u|^2\,dx \quad \mbox{for any}\,\,r>0$$
which leads to
\begin{equation}\label{EI4}
    \begin{split}
        \rho^{5\alpha-2}\int_{-(2\rho)^{2\alpha}}^{0}\sup_{R\geq \rho}R^{-3\alpha}\dashint_{B_R}|u|^2\,dx\,dt.
        &\leq C\rho^{2\alpha-2}\int_{Q_{2\rho}}\mathcal{M}|u|^2\\
        &\leq C \rho^{4\alpha-2}\Big(\int_{Q_{2\rho}}\Big(\mathcal{M}|u|^2\Big)^{\frac{3+2\alpha}{3}}\Big)^{\frac{3}{3+2\alpha}} \\
        &\leq \rho^{4\alpha-2+(L-\gamma)\frac{3}{3+2\alpha}}\varepsilon^{\frac{3}{3+2\alpha}}
    \end{split}
\end{equation}
by H\"older's inequality and the assumption \eqref{MainAssumption}. If we further assume that
\begin{equation}\label{gamma2}
    \gamma\leq \frac{4\alpha-3}{4\alpha}L,
\end{equation}
which allows the exponent of $\rho$ in \eqref{EI1} to be smaller than that of \eqref{EI2},
from the previous estimates \eqref{EI1}, \eqref{EI2}, \eqref{EI3}, and \eqref{EI4} all together,
we conclude that
\begin{equation}\label{A}
    \mathcal{A}(\rho) \leq C \rho^{4\alpha-5+(L-\gamma)\frac{3}{3+2\alpha}}\varepsilon^{\frac{3}{3+2\alpha}}.
\end{equation}
For the term $\mathcal{E}(\rho),$ our assumption \eqref{MainAssumption} immediately implies
\begin{equation}\label{E}
    \mathcal{E}(\rho) \leq C \rho^{4\alpha-5+(L-\gamma)}\varepsilon.
\end{equation}

\noindent \textbf{Step 4. Estimate on the pressure term $II.$} We first claim that
\begin{equation}\label{pressure}
    \mathcal{D}(r) \leq C r^{6\alpha-\frac{15}{2}} \Big(\int_{Q_r} |\nabla p|^{\frac{3+2\alpha}{4}}\Big)^{\frac{8\alpha-6}{3+2\alpha}} \Big(\int_{Q_r}|p-[p]_r|^{\frac{3+2\alpha}{3}}\Big)^{\frac{9-6\alpha}{3+2\alpha}}
\end{equation}
for any $r>0$. By H\"older's inequality and the Sobolev-Poincar\'e inequality, we observe that
\begin{align*}
       \mathcal{D}(r)&= \frac{1}{r^{6-4\alpha}}\int_{Q_r}|p-[p]_r|^{\frac{3}{2}}\,dx\,dt \\
       &\leq  \frac{1}{r^{6-4\alpha}} \int_{-r^{2\alpha}}^{0} \Big(\int_{B_r}|p-[p]_r|^{\frac{3+2\alpha}{4}}\,dx\Big)^{\frac{8\alpha-6}{3+2\alpha}} \Big(\int_{B_r}|p-[p]_r|^{\frac{3+2\alpha}{3}}\,dx\Big)^{\frac{9-6\alpha}{3+2\alpha}}\,dt \\
       &\leq  \frac{C}{r^{6-4\alpha}} \int_{-r^{2\alpha}}^{0} \Big(r^{\frac{3+2\alpha}{4}}\int_{B_r}|\nabla p|^{\frac{3+2\alpha}{4}}\,dx\Big)^{\frac{8\alpha-6}{3+2\alpha}} \Big(\int_{B_r}|p-[p]_r|^{\frac{3+2\alpha}{3}}\,dx\Big)^{\frac{9-6\alpha}{3+2\alpha}}\,dt\\
       &\leq C r^{6\alpha-\frac{15}{2}} \Big(\int_{Q_r} |\nabla p|^{\frac{3+2\alpha}{4}}\Big)^{\frac{8\alpha-6}{3+2\alpha}} \Big(\int_{Q_r}|p-[p]_r|^{\frac{3+2\alpha}{3}}\Big)^{\frac{9-6\alpha}{3+2\alpha}}
\end{align*}
as claimed. A direct consequence of the above claim \eqref{pressure} is the following estimate on $II$
\begin{equation}\label{step1}
\begin{aligned}
    II =\theta^{4\alpha-\frac{3}{2}N} \mathcal{D}(r_0) &\leq C \theta^{(4\alpha-\frac{3}{2})N}r_0^{6\alpha-\frac{15}{2}} \Big(\int_{Q_{r_0}} |\nabla p|^{\frac{3+2\alpha}{4}}\Big)^{\frac{8\alpha-6}{3+2\alpha}} \Big(\int_{Q_{r_0}}|p-[p]_r|^{\frac{3+2\alpha}{3}}\Big)^{\frac{9-6\alpha}{3+2\alpha}}\\
    &\leq C \rho^{(4\alpha-\frac{3}{2})N\zeta+(6\alpha-\frac{15}{2})\eta+(L-\gamma)}\varepsilon
    \end{aligned}
\end{equation}
thanks to the assumption \eqref{MainAssumption}. By our future choice of $\eta$ in \eqref{eta2}, we will further obtain
\begin{equation}
    II\leq C\rho^{J_2}\varepsilon
\end{equation}
with
\begin{equation*}
    J_2:=\left(4\alpha-\frac{3}{2}\right)N\zeta+\left(6\alpha-\frac{15}{2}\right)\eta + L-\gamma.
\end{equation*}
\noindent\textbf{5. Control over $I$. }
Now we have the estimates for $\mathcal{A}(\rho)$ and $\mathcal{E}(\rho)$, so we are in the position to complete the estimate on $\mathcal{C}(r_k)$. By Lemma \ref{interpolation}, we start by
\begin{equation}
    \begin{split}
        \mathcal{C}(r_k)\leq C \Big(\frac{\rho}{r_k}\Big)^{\frac{15}{2}-6\alpha}\mathcal{A}^{1/2}(\rho)\mathcal{E}(\rho)+C \Big(\frac{r_k}{\rho}\Big)^{6\alpha-3}\mathcal{A}^{3/2}(\rho).
    \end{split}
\end{equation}
Noting that $r_k=\rho^{\eta+k\zeta},$ thanks to the estimates for $\mathcal{A}(\rho)$ and $\mathcal{E}(\rho)$ we get
\begin{equation*}
     \mathcal{C}(r_k)\leq C \varepsilon^{\frac{9}{6+4\alpha}}\bigg(\rho^{(\eta +k\zeta)(6\alpha-\frac{15}{2}) +(L-\gamma)\frac{9+4\alpha}{6+4\alpha}}+\rho^{(\eta+k\zeta)(6\alpha-3)+(L-\gamma)\frac{9}{6+4\alpha}-\frac{9}{2}}\bigg).
\end{equation*}
Then some elementary comparison between exponents of $\rho$ leads to
\begin{equation}\label{eq:II}
\begin{split}
     I &=\sum_{i=1}^{N}\theta^{(4\alpha-\frac{3}{2})(i-1)+4\alpha-6}\mathcal{C}(r_{N-i}) \\
     &\leq C\varepsilon^{\frac{9}{6+4\alpha}}\rho^{-\frac{9}{2}\zeta+(\eta+N\zeta)(6\alpha-\frac{15}{2})+(L-\gamma)\frac{9}{6+4\alpha}} \sum_{i=1}^{N}
    \bigg(\rho^{(6-2\alpha) i\zeta+(L-\gamma)\frac{2\alpha}{3+2\alpha}}       +\rho^{i\zeta(\frac{3}{2}-2\alpha)+\frac{9}{2}(\eta+N\zeta)-\frac{9}{2}} \bigg) \\
    &\leq C \varepsilon^{\frac{9}{6+4\alpha}}\rho^{-\frac{9}{2}\zeta+(\eta+N\zeta)(6\alpha-\frac{15}{2})+(L-\gamma)\frac{9}{6+4\alpha}} \bigg(\rho^{(6-2\alpha)\zeta+(L-\gamma)\frac{2\alpha}{3+2\alpha}}+\rho^{N\zeta(\frac{3}{2}-2\alpha)+\frac{9}{2}(\eta+N\zeta)-\frac{9}{2}}\bigg). \\
\end{split}
\end{equation}
In particular, if we further determine $\eta\geq 1$ via the relation
\begin{equation}\label{eta1}
    (6-2\alpha)\zeta+(L-\gamma)\frac{2\alpha}{3+2\alpha} = N\zeta(\frac{3}{2}-2\alpha)+\frac{9}{2}(\eta+N\zeta)-\frac{9}{2},
\end{equation}
or equivalently,
\begin{equation}\label{eta2}
    \eta = \frac{2}{9}\left((6-2\alpha)\zeta+(2\alpha-6)N\zeta+\frac{9}{2}+(L-\gamma)\frac{2\alpha}{3+2\alpha}\right),
\end{equation}
which lets both exponents of $\rho$ in \eqref{eq:II} coincide with each other,
then we further obtain
\begin{equation}
    I\leq C\varepsilon^{\frac{9}{6+4\alpha}}\rho^{J_3}
\end{equation}
where $J_3$ is defined by
\begin{equation*}
    J_3:=\frac{-16\alpha^2+56\alpha-51}{6}\zeta +\frac{(4\alpha-5)(4\alpha-3)}{6}N\zeta +\frac{16\alpha^2-8\alpha+27}{6(3+2\alpha)}(L-\gamma)+\frac{3(4\alpha-5)}{2}.
\end{equation*}

\textbf{Step 6. Choosing parameters.} In short, we have obtained
\begin{equation}
    \begin{split}
        \mathcal{C}(r_N)+\mathcal{D}(r_N)+\mathcal{T}(r_N) &\leq C\rho^{J_1}\varepsilon^{\frac{3}{3+2\alpha}} + C\rho^{J_2}\varepsilon+C\rho^{J_3}\varepsilon^{\frac{9}{6+4\alpha}}
    \end{split}
\end{equation}
If we can choose $(\eta,\zeta,N,\gamma)$ such that $J_i\geq 0$ for all $i\in\{1,2,3\},$ then we obtain
\begin{equation}\label{conclusion}
    \mathcal{C}(r_N)+\mathcal{D}(r_N)+\mathcal{T}(r_N) \leq C \varepsilon^{\frac{3}{3+2\alpha}} \leq \varepsilon_0
\end{equation}
by taking sufficiently small $\varepsilon>0,$ because $\rho$ is strictly smaller than $1.$ By Proposition~\ref{prop_epsilon}, we conclude that $z$ is a regular point. Therefore it suffices to find such parameters $(\eta,\zeta,N,\gamma).$

Since we have also used the assumptions that $\eta\geq 1$ and \eqref{gamma2} for the estimates in the previous steps, actually the quadruple $(\eta,\zeta,N,\gamma)$ should satisfy the below five conditions corresponding to $J_1 \geq 0$, $J_2 \geq 0 $, $J_3 \geq 0$, $\eta \geq 1$, and \eqref{gamma2}:
\begin{equation}\label{five}
    \begin{split}
    \gamma &\leq L + \frac{9(3+2\alp)(4\alp-2)}{16\alp^2-8\alp+27}+\frac{(3+2\alp)(4\alpha-2)(4\alpha-3)}{(16\alp^2-8\alp+27)}N\zeta + \frac{2(3+2\alp)(4\alp-2)(6-2\alp)}{16\alp^2-8\alp+27}\zeta, \\
    \gamma &\leq L + \frac{9(4\alpha-5)(3+2\alpha)}{16\alpha^2-8\alpha+18}+\frac{(3+2\alpha)(16\alpha^2-44\alpha+51)}{16\alpha^2-8\alpha+18}N\zeta+\frac{2(3+2\alpha)(4\alpha-5)(6-2\alpha)}{3(16\alpha^2-8\alpha+18)}\zeta,  \\
    \gamma &\leq L + \frac{9(4\alpha-5)(3+2\alpha)}{16\alpha^2-8\alpha+27}+\frac{(3+2\alpha)(4\alpha-5)(4\alpha-3)}{16\alpha^2-8\alpha+27}N\zeta+\frac{9(3+2\alpha)(4\alpha-5)}{16\alpha^2-8\alpha+27}\zeta,\\
    \gamma &\leq L +\frac{(3+2\alp)(\alp-3)}{\alpha}N\zeta + \frac{(3+2\alpha)(3-\alp)}{\alpha}\zeta, \\
    \gamma &\leq \frac{4\alpha-3}{4\alpha}L.
    \end{split}
\end{equation} Note that we used the choice of $\eta$ in \eqref{eta2} and also that the conditions are stated in terms of the upper bound of $\gamma$ because we hope to maximize $\gamma$ to improve the given bound $L=L(\alpha).$ Then one can check that it is appropriate to set
\begin{equation}\label{N}
    N\zeta=\frac{27(4\alpha-5)}{64\alpha^3-272\alpha^2+300\alpha-369}.
\end{equation}
The above choice of $N\zeta$ can be derived from the following heuristic argument. The other three upper bounds for $\gamma$ in \eqref{five} are expected to be relatively big compared with the two conditions related to $J_2\geq 0$ and $J_3\geq 0$. Thus it suffices to consider those two because they are the only plausible candidates for the most restrictive one. We notice that $\zeta$ vanishes as $N$ grows, so we temporarily assume $\zeta\sim 0$ while $N\zeta$ is still significant. Equating the second upper bound and the third one for $\gamma$, we get \eqref{N} as claimed.

Thanks to the particular choice of $N\zeta$ we can finish the selection process rigorously as follows. For any fixed $\gamma\geq0$ which satisfies
\begin{equation}
\begin{split}
    \gamma &< L(\alpha)+\frac{9(4\alpha-5)(3+2\alpha)}{16\alpha^2-8\alpha+27}+\frac{(3+2\alpha)(4\alpha-5)(4\alpha-3)}{16\alpha^2-8\alpha+27}N\zeta \\
    &= L(\alpha)-\left( \frac{27(3+2\alpha)(4\alpha-5)^2(3-4\alpha)}{(16\alpha^2-8\alpha+27)(64\alpha^3-272\alpha^2+300\alpha-369)}+\frac{9(3+2\alpha)(5-4\alpha)}{16\alpha^2-8\alpha+27}\right) \\
    &=L(\alpha)-J(\alpha),
\end{split}
\end{equation}
we choose sufficiently large $N>0$ such that
\begin{equation*}
    \zeta = \frac{1}{N}\cdot \frac{27(4\alpha-5)}{64\alpha^3-272\alpha^2+300\alpha-369} \leq \frac{16\alpha^2-8\alpha+27}{9(3+2\alpha)(5-4\alpha)} \Big(L(\alpha)-J(\alpha)-\gamma\Big).
\end{equation*}
This leads to \eqref{five} as targeted. In other words, such a specific choice of $N$ allows $\zeta$ to become as negligible as we need in the previous 
 argument for $N\zeta$. This rigorously justifies that our fully determined quadruple $(\eta,\zeta,N,\gamma)$ is admissible. Finally, we select sufficiently small $\rho<1$ that the common ratio $\theta=\rho^\zeta$ is smaller than $1/2$, which is crucial for our auxiliary lemmas (Lemma \ref{interpolation}, and Lemma \ref{lem:pressure}) to be well-applied.  We eventually conclude that
\begin{equation*}
    \mathcal{C}(r_N)+\mathcal{D}(r_N)+\mathcal{T}(r_N)\leq C\varepsilon^{\frac{3}{3+2\alpha}}\leq \varepsilon_0
\end{equation*}
with sufficiently small $\varepsilon>0,$ as desired. Since $\gamma < L(\alpha)-J(\alpha)$ is arbitrary, the proof is complete. \end{proof}

\appendix

\section{Appendix}

\begin{proposition}
Suppose that $(u,p)$ is a suitable weak solution to \eqref{fNSE} in $\R^3\times(0,T)$. Then we have
\begin{equation}\label{pressure5/4}
    \int_{\R^3\times(0,T)}|\nabla p|^{\frac{3+2\alpha}{4}}\,dx\,dt < \infty.
\end{equation}
\end{proposition}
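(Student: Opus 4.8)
The plan is to use that, by the definition of a Leray--Hopf weak solution, $p$ is the potential-theoretic solution of $-\Delta p=\operatorname{div}\operatorname{div}(u\otimes u)$, so that $p=R_iR_j(u_iu_j)$ with $R_i=(-\Delta)^{-1/2}\partial_i$ the Riesz transforms. Differentiating and commuting the derivative past the Fourier multiplier gives $\partial_k p=R_iR_j\,\partial_k(u_iu_j)$, where $R_iR_j$ is a zeroth-order Calder\'on--Zygmund operator. Since $\partial_k(u_iu_j)=u_j\,\partial_k u_i+u_i\,\partial_k u_j$ is pointwise controlled by $|u|\,|\nabla u|$, and since the target exponent $q:=\frac{3+2\alpha}{4}$ lies in $(1,\infty)$ for $\alpha\in(1,\tfrac54)$ (indeed $q\in(\tfrac54,\tfrac{11}{8})$), the $L^q(\R^3)$-boundedness of $R_iR_j$ yields $\|\nabla p(\cdot,t)\|_{L^q_x}\lesssim \big\||u|\,|\nabla u|\big\|_{L^q_x}(t)$ for a.e.\ $t$. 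Raising to the power $q$ and integrating in time reduces \eqref{pressure5/4} to the bound $\big\||u|\,|\nabla u|\big\|_{L^q(\R^3\times(0,T))}<\infty$.

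To estimate the product I would collect the integrability furnished by the energy class $u\in L^\infty((0,T),L^2)\cap L^2((0,T),H^\alpha)$. Since $1<\alpha<\tfrac54<\tfrac32$, the Sobolev embedding $H^\alpha(\R^3)\hookrightarrow L^{6/(3-2\alpha)}$ gives $u\in L^2_tL^{6/(3-2\alpha)}_x$, and interpolating this with $u\in L^\infty_tL^2_x$ produces $u\in L^a_tL^b_x$ for the whole scale. In parallel, $u\in L^2_tH^\alpha_x$ yields $\nabla u\in L^2_tH^{\alpha-1}_x\hookrightarrow L^2_tL^{6/(5-2\alpha)}_x$, again because $0<\alpha-1<\tfrac14<\tfrac32$. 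All of these norms are finite on $(0,T)$ by the definition of the Leray--Hopf solution.

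The bookkeeping then closes as follows. Applying H\"older in space and time, $\big\||u|\,|\nabla u|\big\|_{L^q_{t,x}}\le \|u\|_{L^a_tL^b_x}\,\|\nabla u\|_{L^2_tL^{6/(5-2\alpha)}_x}$, where the exponents are forced by $\frac1q=\frac1a+\frac12$ in time and $\frac1q=\frac1b+\frac{5-2\alpha}{6}$ in space. Solving, one needs $u\in L^a_tL^b_x$ with $\frac1a=\frac{5-2\alpha}{2(3+2\alpha)}$ and $\frac1b=\frac{4\alpha^2-4\alpha+9}{6(3+2\alpha)}$; a direct check shows this pair corresponds exactly to the interpolation parameter $\theta=\frac{5-2\alpha}{3+2\alpha}\in(0,1)$ between $L^\infty_tL^2_x$ and $L^2_tL^{6/(3-2\alpha)}_x$, so that $u\in L^a_tL^b_x$ with the corresponding norm bound. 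This gives $\|\nabla p\|_{L^q(\R^3\times(0,T))}<\infty$ and hence \eqref{pressure5/4}.

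The point that requires care---and the only genuinely $\alpha$-dependent step---is that the naive choice $\nabla u\in L^2_{t,x}$ (merely $L^2$ in space) is \emph{not} sufficient when $\alpha>1$: pairing it with $u$ would require $u\in L^{2(3+2\alpha)/(5-2\alpha)}_{t,x}$, and one checks that along the energy interpolation segment the available spatial exponent at the matching time exponent falls strictly short of this for every $\alpha>1$ (the two coincide only at $\alpha=1$). What rescues the estimate is precisely the extra spatial integrability $\nabla u\in L^2_tL^{6/(5-2\alpha)}_x$ coming from the $H^\alpha$-regularity, i.e.\ from the hyperdissipation; verifying that the resulting exponents sum correctly in both the time and the space H\"older inequalities is the main computation, and it is routine once the embeddings above are in hand.
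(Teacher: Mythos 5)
Your proposal is correct and follows essentially the same route as the paper: represent $\nabla p$ via the zeroth-order operator $R_iR_j$ applied to $\partial_k(u_iu_j)$, bound it by $|u|\,|\nabla u|$ in $L^{q}$ with $q=\frac{3+2\alpha}{4}$, and close using the fractional Sobolev embeddings $\nabla u\in L^2_tL^{6/(5-2\alpha)}_x$, $u\in L^2_tL^{6/(3-2\alpha)}_x$ together with interpolation against $L^\infty_tL^2_x$ (your interpolation parameter $\theta=\frac{5-2\alpha}{3+2\alpha}$ is exactly the complement of the paper's $\theta=\frac{4\alpha-2}{3+2\alpha}$, and your mixed-norm H\"older bookkeeping reproduces the paper's space-then-time computation). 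The closing remark about why $\nabla u\in L^2_{t,x}$ alone fails for $\alpha>1$ is accurate and a nice addition, but not needed for the proof.
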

\begin{proof}
Denote by $R_k$ the $k$-th Riesz transform with symbol $i\xi_k/|\xi|$. Since $\partial_i$ and $\sum R_i R_j$ commute (consider their Fourier symbols), we write
\begin{equation*}
    \partial_k p= \sum R_i R_j (\partial_k u_i u_j + u_i \partial_k u_j)
\end{equation*}
using the divergence-free condition of $u.$ Since $\sum R_i R_j$ is $L^p$-bounded for any $1<p<\infty,$ we obtain
\begin{equation}\label{riesz}
    \|\nabla p\|_{L^p(\R^3)} \leq C_{p} \||u||\nabla u|\|_{L^p(\R^3)}.
\end{equation}
Observing $\|\nabla u\|_{L^{\frac{6}{5-2\alpha}}(\R^3)} \leq C \|(-\Delta)^{\frac{\alpha}{2}}u\|_{L^2(\R^3)}$ and $\|u\|_{L^{\frac{6}{3-2\alpha}}(\R^3)}\leq C \|(-\Delta)^{\frac{\alpha}{2}}u\|_{L^2(\R^3)}$ by fractional Sobolev's embeddings, we use \eqref{riesz}, H\"older's inequality, and Lebesgue interpolation to compute
\begin{equation*}
    \begin{split}
        \int_{0}^{T}\int_{\R^3}|\nabla p|^{r} \,dx\,dt &\leq C \int _{0}^{T} \int_{\R^3} |(u\cdot\nabla) u|^{r} \,dx\,dt  \\
        &\leq C \int_{0}^{T} 
         \Big(\int_{\R^3}|u|^{\frac{6}{6-r(5-2\alpha)}}\Big)^{\frac{6-r(5-2\alpha)}{6}} \Big(\int_{\R^3} |\nabla u|^{\frac{6}{5-2\alpha}}\Big)^{\frac{r(5-2\alpha)}{6}}\,dt \\
        &\leq C \int_{0}^{T} \Big( \|u\|_{L^2}^{\theta}\|u\|_{L^{\frac{6}{3-2\alpha}}}^{1-\theta}\Big)^{r} \Big(\int |(-\Delta)^{\frac{\alpha}{2}}u|^2\Big)^{\frac{r}{2}} \,dt \\
    \end{split}
\end{equation*}
for $r>0$ and $0<\theta<1$ such that $2<\frac{6r}{6-r(5-2\alpha)}<\frac{6}{3-2\alpha}.$ By the uniform in time $L^2$-boundedness of $u$, fractional Sobolev's embedding, and H\"older's inequaltiy again, we get
\begin{equation*}
    \begin{split}
        \int_{0}^{T}\int_{\R^3}|\nabla p|^{r} \,dx\,dt
        &\leq C \Big(\int_{0}^{T} \|u\|_{L^{\frac{6}{3-2\alpha}}}^{2} \,dt\Big)^{\frac{r(1-\theta)}{2}} \Big(\int_{0}^{T}\Big(\int|(-\Delta)^{\frac{\alpha}{2}}u|^2\Big)^{\frac{r}{2-r(1-\theta)}}\,dt\Big)^{\frac{2-r(1-\theta)}{2}} \\
        &\leq C \Big(\int_{\R^3\times(0,T)} |(-\Delta)^{\frac{\alpha}{2}}u|^2 \,dx\,dt \Big)^{\frac{r(1-\theta)}{2}} \Big(\int_{0}^{T}\Big(\int|(-\Delta)^{\frac{\alpha}{2}}u|^2\Big)^{\frac{r}{2-r(1-\theta)}}\,dt\Big)^{\frac{2-r(1-\theta)}{2}} \\
        &\leq C \Big(\int_{0}^{T}\Big(\int|(-\Delta)^{\frac{\alpha}{2}}u|^2\Big)^{\frac{r}{2-r(1-\theta)}}\,dt\Big)^{\frac{2-r(1-\theta)}{2}}
    \end{split}
\end{equation*}
for $r>0$ and $0<\theta<1$ which further satisfies $\frac{2}{r(1-\theta)}>1$ to apply H\"older's inequality for the first inequality in the above calculation. Now we set
\begin{equation*}
    \begin{split}
        r:=\frac{3+2\alpha}{4}, \quad 
        \theta:= \frac{4\alpha-2}{3+2\alpha}.
    \end{split}
\end{equation*}
For such $r$ and $\theta$, not only all the previous computations are valid but also we have $\frac{r}{2-r(1-\theta)}=1$ so that we conclude that
\begin{equation*}
    \int_{\R^3\times (0,T)} |\nabla p|^{\frac{3+2\alpha}{4}} \,dx\,dt \leq C \Big(\int_{\R^3\times(0,T)} |(-\Delta)^{\frac{\alpha}{2}}u|^2 \,dx\,dt \Big)^{\frac{3+2\alpha}{8}} < \infty,
\end{equation*}
as desired. The proof is complete.
\end{proof}


\begin{thebibliography}{56}

\bibitem{Besi}
A. S. Besicovitch: \textit{On linear sets of points of fractional dimension.} Mathematische Annalen, \textbf{101}(1), 161-193 (1929).

\bibitem{Bouligand}
G. Bouligand: \textit{Ensembles impropres et nombre dimensionnel.} Bull. Sci. Math., \textbf{52}, 361-376 (1928).

\bibitem{CKN}
L. Caffarelli, R. Kohn and L. Nirenberg:
\textit{Partial regularity of suitable weak solutions of Navier-Stokes equation}. Comm. Pure. Appl. Math., \textbf{35}, 771–831 (1982).


\bibitem{CS}
L. Caffarelli and L. Silvestre:
\textit{An extension problem related to the fractional Laplacian}. Comm. Partial Differential
Equations, \textbf{32}(7-9), 1245–1260 (2007).

\bibitem{CDM} 
M. Colombo, C. De Lellis,  and A. Massaccesi:
\textit{The generalized Caffarelli-Kohn-Nirenberg theorem for the hyperdissipative Navier-Stokes system}.
Comm. Pure Appl. Math., 
\textbf{73}(3), 609–663 (2019).

\bibitem{F}
K. Falconer, Fractal geometry, 3rd Edition, John Wiley \& Sons, Ltd., Chichester, mathematical founda- tions and applications (2014).

\bibitem{GP}
C. Guevara, N. C. Phuc: \textit{Local energy bounds and $\varepsilon$-regularity criteria for the 3D Navier–Stokes system.} Calculus of Variations and Partial Differential Equations, \textbf{56}(3), 68 (2017). 

\bibitem{Haus}
F. Hausdorff: \textit{Dimension und äußeres Maß.} Mathematische Annalen, 79(1), 157-179 (1918).

\bibitem{HWZ}
C. He, Y. Wang, D. Zhou: \textit{New $\varepsilon$-regularity criteria of suitable weak solutions of the 3D Navier–Stokes equations at one scale.} Journal of Nonlinear Science, \textbf{29}(6), 2681-2698 (2019).

\bibitem{KP}
N. Katz, N. Pavlovic, A cheap Caffarelli-Kohn-Nirenberg inequality for the Navier-Stokes equation with hyper-dissipation. Geom. Funct. Anal. 12, no. 2, 355–379 (2002)

\bibitem{KY} 
Y. Koh and M. Yang:
\textit{The Minkowski dimension of interior singular points in the incompressible Navier-Stokes equations}.
J. Differential Equations., \textbf{261}, 3137–3148 (2016).

\bibitem{K}
I. Kukavica:
\textit{The fractal dimension of the singular set for solutions of the Navier-Stokes system}.
Nonlinearity, \textbf{22}, 2889–2900 (2009).


\bibitem{KP2}
I. Kukavica and Y. Pei:
\textit{An estimate on the parabolic fractal dimension of the singular set for solutions of the Navier-Stokes system}.
Nonlinearity, \textbf{25}, 2775–2783 (2012).

\bibitem{KO}
H. Kwon, W. S. Ożański, Local regularity of weak solutions of the hypodissipative Navier-Stokes equations. Journal of Functional Analysis, \textbf{282}(7), 109370 (2022).

\bibitem{LS}
O. A. Ladyzhenskaya, G. A. Seregin: \textit{On partial regularity of suitable weak solutions to the three-dimensional Navier—Stokes equations.} Journal of Mathematical Fluid Mechanics, \textbf{1}, 356-387 (1999).

\bibitem{Leray}
J. Leray:
\textit{Sur le mouvement d’un liquide visqueux emplissant l’espace}. Acta Math., \textbf{63}(1), 193–248 (1934).

\bibitem{F}
F. Lin:
\textit{A new proof of the Caffarelli-Kohn-Nirenberg Theorem}. Comm. Pure Appl. Math., \textbf{51},  241–257 (1998).

\bibitem{Lions}
J.-L. Lions:
\textit{Quelques m\'ethodes de r\'esolution des probl\`emes aux limites non lin\'eaires}. Dunod; Gauthier-Villars, Paris (1969).

\bibitem{Minkowski}
H. Minkowski: \textit{Geometrie der zahlen} Vol. 1. BG Teubner (1910).

\bibitem{Ozanski}
W. S. Ożański: \textit{Partial regularity of Leray–Hopf weak solutions to the incompressible Navier–Stokes equations with hyperdissipation.} Analysis \& PDE, \textbf{16}(3), 747-783 (2023).

\bibitem{RWW}
W. Ren, Y. Wang, G. Wu: \textit{Remarks on the singular set of suitable weak solutions for the three-dimensional Navier–Stokes equations.} Journal of Mathematical Analysis and Applications, \textbf{467}(2), 807-824 (2018).

\bibitem{RS}
J. C. Robinson, W. Sadowski: \textit{Almost-everywhere uniqueness of Lagrangian trajectories for suitable weak solutions of the three-dimensional Navier–Stokes equations.} Nonlinearity, \textbf{22}(9), 2093 (2009). 

\bibitem{V1}
V. Scheffer:
\textit{Partial regularity of solutions to the Navier-Stokes equations}. Pacific J. Math., \textbf{66}, 535–552 (1976).

\bibitem{V2}
V. Scheffer:
\textit{Hausdorff measure and the Navier-Stokes equations}. Comm. Math. Phys., \textbf{55}(2), 97–112 (1977).

\bibitem{TY}
L. Tang and Y. Yu: 
\textit{Partial regularity of suitable weak solutions to the fractional Navier-Stokes equations}. Comm. Math.
Phys., \textbf{334}(3), 1455–1482 (2015).

\bibitem{Tao}
 T. Tao:
 \textit{Global regularity for a logarithmically supercritical hyperdissipative Navier-Stokes equation}. Anal. PDE,
\textbf{2}(3), 361–366 (2009).


\bibitem{WW}
Y. Wang and G. Wu:
\textit{On the box-counting dimension of potential singular set for suitable weak solutions to the 3D Navier-Stokes equations}. Nonlinearity, \textbf{30}, 1762-1772 (2017).

\bibitem{WY}
Y. Wang and M. Yang:
\textit{Improved bounds for box dimensions of potential singular points
to the Navier-Stokes equations}, Nonlinearity, \textbf{32}, 4817–4833 (2019).

\bibitem{Yang}
R. Yang:
\textit{On higher order extensions for the fractional Laplacian}. arXiv:1302.4413, (2013).  


\end{thebibliography}
\end{document}